\newcommand{\tsk}[1]{\textcolor{YellowOrange}}
\def\@endtheorem{\endtrivlist}
\newcommand{\Pic}{\operatorname{Pic}}
\newcommand{\Cliff}{\operatorname{Cliff}}
\newtheorem{teo}{Theorem}[section]
\newtheorem{defin}[teo]{Definition}
\newtheorem{prop}[teo]{Proposition}
\newtheorem{cor}[teo]{Corollary}
\theoremstyle{definition}
\newtheorem{remark}[teo]{Remark}
\newtheoremstyle{dico}
 {\baselineskip}   
  {\topsep}   
  {}  
  {0pt}       
  {} 
  {.}         
  {5pt plus 1pt minus 1pt} 
  {}          
\theoremstyle{dico}
\numberwithin{equation}{section}
\newcounter{example}[subsection]
\newenvironment{example}[1][]
{\refstepcounter{example}\smallskip\par\noindent\textbf{Example~\theexample.}\nopagebreak\par\noindent}{\medskip}
\newcommand{\ra}{\rightarrow}
\newcommand{\C}{\mathbb{C}}
\newcommand{\Zeta}{{\mathbb{Z}}}
\newcommand{\ZZ}{{\mathbb{Z}}}
\newcommand{\N}{{\mathbb{N}}}
\newcommand{\meno}{^{-1}}
\newcommand{\restr}[1]          {\vert_{#1}}
\newcommand{\Aut}{\operatorname{Aut}}
\newcommand{\Hom}{\operatorname{Hom}}
\newcommand{\om}{\omega}
\renewcommand{\phi}{\varphi}
\newcommand{\sx}{\langle}
\newcommand{\xs}{\rangle}
\newcommand{\PP}{\mathbb{P}^1}
\newcommand{\OO}{\mathcal{O}}
\newcommand{\mg}{\mathcal{M}_g}
\newcommand{\mihi}[1]{}
\newcommand{\Nm}{\operatorname{Nm}}
\newcommand{\e}{\qquad \text{and}\qquad}
\def\fracpart#1{\left\langle#1\right\rangle}
\begin{document}
 
\pagestyle{myheadings}

\title{Explicit analysis of Positive dimensional fibres of $ \mathcal{P}_{g,r} $ and Xiao Conjecture}

\author{Gian Paolo Grosselli and Irene Spelta}

\address{{Gian Paolo Grosselli  \\ Universit\`a degli Studi di Pavia  \\ Dipartimento di Matematica \\ Via Ferrata 5}{ 27100 Pavia, Italy.}}
\email{g.grosselli@campus.unimib.it}

\address{Irene Spelta  \\ Universit\`a degli Studi di Pavia  \\ Dipartimento di Matematica \\ Via Ferrata 5  \\ 27100 Pavia, Italy.  }
\email{irene.spelta01@universitadipavia.it}

\thanks{\textit{2010 Mathematics Subject Classification}. 14H10, 14H30, 14H40.}
\date{}

	\begin{abstract}
	We focus on the positive dimensional fibres of the Prym map $\mathcal{P}_{g,r}$. We present a direct procedure to investigate infinitely many examples of positive dimensional fibres. Such procedure uses families of Galois coverings of the line admitting a 2-sheeted Galois intermediate quotient. Then we generalize to families of Galois coverings of the line admitting a Galois intermediate quotient of higher degree and we show that the higher degree analogue of the aforementioned procedure gives all the known counterexamples to a conjecture by Xiao on the relative irregularity of a fibration. 
	\end{abstract}
	\maketitle
	\setcounter{section}{-1}

	\section{Introduction}
Let $  C $ be a curve of genus $g\geq1$ and let $ f : \tilde C \ra C $ be a 2-sheeted covering of $C$ ramified at $r\geq 0$ points. The Prym variety $P:=P(\tilde C, C)$ is a polarized abelian variety of dimension $g-1+\frac{r}{2}$ associated with $ f $. It is defined as the identity component of the kernel of the Norm map $ \Nm_f : J\tilde C \ra JC $. The theta divisor of $J\tilde C$ induces a polarization on the Prym variety $P$ of type $\delta:=(1,\dots,1,2,\dots,2)$ with 2 repeated $ g $ times if $r>0$ and $g-1$ times if $ r=0 $.

Let us denote by $\mathcal{R}_{g,r}$ the coarse moduli space of isomorphism classes of coverings $ f $ and by $\mathcal{A}_{g-1+\frac{r}{2}}^\delta $ the one of abelian varieties of dimension $  g-1+\frac{r}{2} $ with polarization of type $\delta$.
The theory of double coverings provides an alternative description of $\mathcal{R}_{g,r}$. Indeed there is a 1-1 correspondence between double covers $f : \tilde C \ra C $ and triples $(C,\eta, B)$ in \[\mathcal{R}_{g,r}=\{(C,\eta, B): C\in \mg, \eta\in \Pic^{\frac{r}{2}}(C), B \text{ reduced in } \vert \eta^{\otimes 2}\vert\}.\]
When $ r = 0 $, the branch divisor $B$ is empty, hence we can identify $ f $ with a pair $(C,\eta)$, with $\eta\in\Pic^0(C)\smallsetminus \{\OO_C\}$ such that $\eta^{\otimes 2}\cong\OO_C.$

The Prym map is the morphism
\[\mathcal{P}_{g,r}: \mathcal{R}_{g,r}\ra\mathcal{A}_{g-1+\frac{r}{2}}^\delta 	\]
which sends triples $(C,\eta, B)$ to the corresponding Prym variety $ P $.

The case $ r=0 $ is very classical. Indeed unramified Prym varieties are principally polarized abelian varieties and they have been studied for over one hundred years. Our (algebraic) point of view has been presented for the first time by Mumford in \cite{mumford} in 1974. Then many papers investigated this case and nowadays we have a lot of information on $\mathcal{P}_{g,0}$. Donagi very well discusses it in \cite{donagi}. Good surveys are \cite{Farkas} and \cite{ortega}. 

The case $r>0$ has become of interest only more recently. Indeed ramified Prym varieties are abelian varieties no longer principally polarized (except in the case of $r=2$). As such, they started to be studied quite late. The seminal paper is the one by Marcucci and Pirola (\cite{mp}), which came out only in 2012. From this work, many other authors have investigated the ramified Prym maps $\mathcal{P}_{g,r}$. At this moment it is a very active area of research. For instance, very recently, it has been proved that if $r\geq 6$ then  $\mathcal{P}_{g,r}$ is injective (\cite{ikeda}, \cite{naranjo-ortega2}). 

When the genus $ g $ and the number $ r $ are low, more precisely when $g<6$ for $r=0$, $g\leq 4$ for $r=2$ and $g\leq 2$ for $r=4$, the fibres of the Prym maps $\mathcal{P}_{g,r}$ are positive dimensional and they carry plenty of geometry which is well-understood (see \cite{donagi} and \cite{fns}). The structure of the generic positive dimensional fibre is so peculiar that one needs to find an ad-hoc procedure to describe each of them.

In \cite{naranjo}, Naranjo stated that, for $g$ large enough, the \'etale Prym map $\mathcal{P}_{g,0}$ has positive dimensional fibres only on the locus of coverings of hyperelliptic curves and on some components of the locus of coverings of bielliptic curves. Naranjo and Ortega
(see \cite[Theorem 1.2]{naranjo-ortega2}) showed that the ramified Prym map $\mathcal{P}_{g,r}$, with $r=2,4$ and any value for $g$, has positive dimensional fibres when restricted to covers of hyperelliptic curves. In the same paper, the authors also proved that the Prym map $\mathcal{P}_{5,2}$ carries positive dimensional fibres when restricted to the locus of trigonal curves. Finally, Casalaina-Martin and Zhang \cite[Theorem 5.1]{cas zhang} produced some positive dimensional fibres for $\mathcal{P}_{3,4}$ under the assumption $\eta$ effective.

Thus it turns out that, except for few isolated cases, the hyperelliptic locus represents a good place to look for positive dimensional fibres for the Prym map. Nevertheless, when $r>0$ it is unknown if we should expect the existence of other examples. Indeed we only have the following:
\begin{prop}[\cite{naranjo-ortega2}]
	Assume $r>0$. If the differential $d\mathcal{P}_{g,r}$ is not injective then we are in one among these cases.
	\begin{itemize}
		\item[$r=2:$] $C$ is hyperelliptic or trigonal or plane quintic or $\eta=\OO_C(x+y-z)$, for $x,y,z \in C$.
		\item[$r=4:$] $C$ is hyperelliptic or $h^0(C,\eta)>0$. 
	\end{itemize}
\end{prop}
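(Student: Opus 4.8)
The plan is to reduce the statement to a concrete question about the surjectivity of a multiplication map of sections on $C$, and then to read off the special geometries that obstruct that surjectivity. First I would set up the infinitesimal description of $\mathcal{P}_{g,r}$ at a point $(C,\eta,B)$. The cotangent space of $\mathcal{A}^\delta_{g-1+r/2}$ at the Prym variety $P$ is $\operatorname{Sym}^2 H^0(P,\Omega^1_P)$, and since the anti-invariant differentials of the covering $f$ are identified with the Prym-canonical system, one has $H^0(P,\Omega^1_P)\cong H^0(C,\omega_C\otimes\eta)$. On the source side, a dimension count (the moduli of triples amounts to a curve in $\mathcal{M}_g$ together with the reduced branch divisor $B$ of degree $r$, the choice of $\eta$ with $\eta^{\otimes 2}\cong\OO_C(B)$ being finite) gives $\dim\rgr=3g-3+r=\dim H^0(C,\omega_C^{\otimes 2}(B))$, and Serre duality identifies $T^*_{(C,\eta,B)}\rgr$ with $H^0(C,\omega_C^{\otimes 2}(B))$. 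Via the Kodaira--Spencer/period-map description of $d\mathcal{P}_{g,r}$, its codifferential is the multiplication map
$$\mu:\operatorname{Sym}^2 H^0(C,\omega_C\otimes\eta)\longrightarrow H^0(C,\omega_C^{\otimes 2}\otimes\eta^{\otimes 2})=H^0(C,\omega_C^{\otimes 2}(B)),$$
using $\eta^{\otimes 2}\cong\OO_C(B)$. Hence $d\mathcal{P}_{g,r}$ fails to be injective exactly when $\mu$ fails to be surjective, its cokernel being dual to $\ker d\mathcal{P}_{g,r}$.

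Next I would analyze the surjectivity of $\mu$ for $L:=\omega_C\otimes\eta$, a line bundle of degree $2g-2+\tfrac{r}{2}$. The standard machinery here is the base-point-free pencil trick together with Green-type (Castelnuovo--Mumford) surjectivity criteria for multiplication maps $\operatorname{Sym}^2 H^0(L)\to H^0(L^{\otimes 2})$: on a curve of sufficiently high Clifford index such a map is surjective once $\deg L$ is large, so any failure forces the existence of a pencil (or net) of low degree on $C$, that is, a bound on the gonality or Clifford index of $C$. Concretely I would produce, from a non-zero element of $\operatorname{coker}\mu$, a base-point-free linear series $A$ on $C$ computing a small Clifford index and bound its degree. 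The way in which $A$ interacts with the specific bundle $L=\omega_C\otimes\eta$ is what pins down not only the gonality of $C$ but also the shape of $\eta$.

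Then I would run the case analysis separately for $r=2$ (so $\deg\eta=1$) and $r=4$ (so $\deg\eta=2$). In each case $\deg L$ is small enough that the numerical bounds coming from the pencil trick cut the possibilities down to a short, explicit list. For $r=2$ the low-Clifford-index curves are the hyperelliptic, trigonal and plane-quintic ones, while the remaining obstruction arises when $\eta$ itself is special, which I would identify as $\eta=\OO_C(x+y-z)$. For $r=4$ the larger degree of $\eta$ leaves only the hyperelliptic case together with the case where $\eta$ is effective, i.e.\ $h^0(C,\eta)>0$.

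The main obstacle I expect is the careful handling of the borderline numerical cases and the passage from ``$\mu$ not surjective'' to a genuine geometric statement about $C$ and $\eta$: the Clifford-index/gonality bounds obtained from the pencil trick are typically not sharp, so separating the curves that really obstruct injectivity (hyperelliptic, trigonal, plane quintic) from those that do not requires a delicate study of the base locus of $|L|$ and of the induced multiplication on the quotient bundle. The genuinely exceptional configurations, namely $\eta=\OO_C(x+y-z)$ for $r=2$ and $h^0(\eta)>0$ for $r=4$, are precisely those in which $L$ acquires base points or the Prym-canonical map degenerates, and these must be extracted by hand rather than from a uniform Clifford-index estimate.
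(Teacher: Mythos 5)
Your proposal is correct and follows essentially the same route as the paper: identify the codifferential with the multiplication map $S^2H^0(C,\omega_C\otimes\eta)\to H^0(C,\omega_C^{\otimes 2}(B))$, apply the Green--Lazarsfeld surjectivity criterion (using $\deg L=2g-2+\tfrac r2$ and $h^1(L)=0$) to force either $L$ not very ample or $\Cliff(C)\le 3-\tfrac r2$, and then translate the two alternatives into the stated list. The only difference is that the paper outsources the borderline work you anticipate to its references, quoting \cite{greenlaz} for the criterion and \cite{lange-ortega Cyclic cov} for the characterization of the non--very-ample case as $\eta=\OO_C(x+y-z)$ (resp.\ $h^0(\eta)>0$).
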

This Proposition doesn't conclude the classification. Apart from the hyperelliptic locus and the isolated examples mentioned in the previous paragraph (i.e.\ the trigonal locus in genus 5 and $	\eta$ effective in genus 3), we still do not known the behaviour of the differential $d\mathcal{P}_{g,r}$ on the remaining cases.

The goal of this paper is to analyse infinitely many examples of positive dimensional fibres of the Prym maps, both in the \'etale or in the ramified case. We use positive dimensional families of Galois covers $\tilde C\ra \tilde C/\tilde G\cong \PP$ of the line where the genus $\tilde g:=g(\tilde C)$, the number of ramification points $s$ and the monodromy are fixed. Then we look for which among these families admit as intermediate quotient $\tilde C\ra C$ a 2:1 map ramified in $r=0,2,4$. Hence we select the ones with associated $d\mathcal{P}_{g,r}$ non-injective. 
This request corresponds to a simple numerical condition as follows.
\begin{prop}\label{ prop differenziale}
	If $\dim(S^2H^0(\om_{\tilde C}))^{\tilde G}- \dim(S^2H^0(\om_{C}))^{\tilde G}<\dim H^0(\om_{\tilde C}^{\otimes2})^{\tilde G}$ then the differential  $d\mathcal{P}_{g,r}$ along the family is not injective. Hence the Prym map has positive dimensional fibres along the family.
\end{prop}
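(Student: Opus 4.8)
The plan is to reduce the statement to the failure of surjectivity of a suitably equivariant codifferential of $\mathcal{P}_{g,r}$. Write $\sigma$ for the involution generating the order-two normal subgroup $\langle\sigma\rangle\trianglelefteq\tilde G$ with $\tilde C/\langle\sigma\rangle\cong C$, and set $G:=\tilde G/\langle\sigma\rangle$, so that $C\ra\PP$ is Galois with group $G$; a normal subgroup of order two is automatically central, hence $\sigma$ commutes with all of $\tilde G$ and the $\sigma$-eigenspace decomposition of any $\tilde G$-representation is $\tilde G$-stable. Decorating by $+$ and $-$ the $(\pm1)$-eigenspaces of $\sigma$, recall the standard identifications $H^0(\om_{\tilde C})^+=H^0(\om_C)$ and $H^0(\om_{\tilde C})^-\cong H^0(P,\Omega^1_P)$, the Prym differentials, of dimension $g-1+\frac r2$. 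The first step is to recall (following \cite{mp,naranjo-ortega2}) that the codifferential of $\mathcal{P}_{g,r}$ at $[(C,\eta,B)]$ is the multiplication map
\[ \mu\colon S^2H^0(\om_{\tilde C})^-\lra H^0(\om_{\tilde C}^{\otimes2})^+, \]
and that $d\mathcal{P}_{g,r}$ is injective if and only if $\mu$ is surjective. The target is the right one because a product of two $\sigma$-anti-invariant sections is $\sigma$-invariant, and via the projection formula one checks that $H^0(\om_{\tilde C}^{\otimes2})^+=H^0(C,\om_C^{\otimes2}(B))$, whose dimension $3g-3+r$ equals $\dim\mathcal{R}_{g,r}$, so that this space is exactly the cotangent space to $\mathcal{R}_{g,r}$ at the point.

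The second step is to pass to the family. Since $\mathcal{P}_{g,r}$ is $\tilde G$-equivariant and the tangent space to the family at $[(C,\eta,B)]$ is the $\tilde G$-invariant part of $T\mathcal{R}_{g,r}$, the restriction of $d\mathcal{P}_{g,r}$ to the family is precisely the induced map $d\mathcal{P}_{g,r}^{\tilde G}$ on $\tilde G$-invariants. Because $\tilde G$ is finite and we work in characteristic zero, taking invariants is exact and commutes with dualization; hence $d\mathcal{P}_{g,r}$ restricted to the family is injective if and only if the induced map on invariants of the codifferential
\[ \mu^{\tilde G}\colon \big(S^2H^0(\om_{\tilde C})^-\big)^{\tilde G}\lra \big(H^0(\om_{\tilde C}^{\otimes2})^+\big)^{\tilde G} \]
is surjective.

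The third step is the dimension bookkeeping matching the hypothesis with the failure of surjectivity of $\mu^{\tilde G}$. Decompose $S^2H^0(\om_{\tilde C})=S^2H^0(\om_{\tilde C})^+\oplus\big(H^0(\om_{\tilde C})^+\otimes H^0(\om_{\tilde C})^-\big)\oplus S^2H^0(\om_{\tilde C})^-$; the central element $\sigma$ acts by $+1,-1,+1$ on the three summands, so the middle term contributes nothing to the $\tilde G$-invariants and
\[ \big(S^2H^0(\om_{\tilde C})\big)^{\tilde G}=\big(S^2H^0(\om_C)\big)^{\tilde G}\oplus\big(S^2H^0(\om_{\tilde C})^-\big)^{\tilde G}. \]
Thus the left-hand side of the inequality equals $\dim\big(S^2H^0(\om_{\tilde C})^-\big)^{\tilde G}$, the dimension of the domain of $\mu^{\tilde G}$. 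Likewise $\sigma$-invariance singles out $H^0(\om_{\tilde C}^{\otimes2})^+$ inside $H^0(\om_{\tilde C}^{\otimes2})$, so the right-hand side $\dim H^0(\om_{\tilde C}^{\otimes2})^{\tilde G}$ equals $\dim\big(H^0(\om_{\tilde C}^{\otimes2})^+\big)^{\tilde G}$, the dimension of the codomain of $\mu^{\tilde G}$. The assumed inequality therefore reads $\dim(\text{domain }\mu^{\tilde G})<\dim(\text{codomain }\mu^{\tilde G})$, which forces $\mu^{\tilde G}$ to be non-surjective; by the second step $d\mathcal{P}_{g,r}$ is not injective along the family, and hence $\mathcal{P}_{g,r}$ has positive dimensional fibres there.

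The main obstacle is the first step: pinning down the codifferential of the ramified Prym map as the stated multiplication map with the correctly twisted target $H^0(C,\om_C^{\otimes2}(B))$, and justifying the ``injective $\Leftrightarrow$ surjective'' criterion in the ramified setting, which is the analogue of the classical Prym--Torelli computation. A secondary point requiring care is identifying the tangent space to the equisymmetric family with the $\tilde G$-invariant deformations, so that restricting $d\mathcal{P}_{g,r}$ to the family genuinely corresponds to passing to $\tilde G$-invariants; once both are in place, the remaining steps are formal linear algebra and representation-theoretic bookkeeping.
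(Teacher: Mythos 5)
Your proposal is correct and follows essentially the same route as the paper: identify the codifferential with the multiplication map $(S^2V_-)^{\tilde G}\to W_+^{\tilde G}$ of \cite{mp}, restrict to $\tilde G$-invariants (the tangent space of the family), and observe that the hypothesis says exactly $\dim(\text{domain})<\dim(\text{codomain})$, so the codifferential cannot be surjective and its dual has nontrivial kernel. Your third step merely spells out the identity $\tilde N-N=\dim(S^2V_-)^{\tilde G}$ that the paper states as immediate.
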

In particular, when $\dim(S^2H^0(\om_{\tilde C}))^{\tilde G}= \dim(S^2H^0(\om_{C}))^{\tilde G}$ the family is all contained in a fibre of $\mathcal{P}_{g,r}$. This is always the situation in case of $H^0(\om_{\tilde C}^{\otimes2})^{\tilde G}=1$, i.e.\ of 1-dimensional families, under the assumption of Proposition \ref{ prop differenziale}. 

We show the following:
\begin{teo}
	For any $N\in \mathbb{N}$ there exist 1-dimensional families of Galois covers $\tilde C\ra C\ra \PP$ contained in the fibres of $\mathcal{P}_{2N,0}, \mathcal{P}_{2N,2}, \mathcal{P}_{2N-1,0}, \mathcal{P}_{2N-1,4}$. 
\end{teo}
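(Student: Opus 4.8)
The plan is to produce, for each $N$ and each of the four target maps, one explicit one-parameter family and then to reduce the statement to the purely numerical criterion of Proposition \ref{ prop differenziale}. The starting observation is that a $\tilde G$-invariant bicanonical form on $\tilde C$ descends to a quadratic differential on $\PP$ having at worst simple poles along the branch locus: a local computation at each ramification point of $\tilde C\ra\PP$ shows that if the cover is branched at $s$ points, then
\[
\dim H^0(\om_{\tilde C}^{\otimes 2})^{\tilde G}=\dim H^0\bigl(\PP,\om_{\PP}^{\otimes 2}(b_1+\cdots+b_s)\bigr)=s-3 .
\]
Hence (for $s\geq 3$) a family is one-dimensional exactly when the cover has $s=4$ branch points, the single modulus being the cross-ratio of $b_1,\dots,b_4$. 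So first I would fix, for every $N$, a finite group $\tilde G$ of order growing with $N$ together with a $4$-tuple of local monodromies $(\gamma_1,\dots,\gamma_4)$ satisfying the product-one relation and generating $\tilde G$.

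Next I would read off the geometry of the resulting cover. By Riemann--Hurwitz the monodromy data determine $\tilde g=g(\tilde C)$. Inside $\tilde G$ I single out an involution $\sigma$ and set $C:=\tilde C/\langle\sigma\rangle$, so that $\tilde C\ra C$ is the $2{:}1$ intermediate cover whose Prym we study; the number $r$ of its branch points equals the number of fixed points of $\sigma$ on $\tilde C$, which is computed from the local data (a point over $b_i$ is fixed by $\sigma$ precisely when $\sigma$ lies in the stabilizer of that point, a conjugate of $\langle\gamma_i\rangle$). A second application of Riemann--Hurwitz to $\tilde C\ra C$ then gives $g(C)$ in terms of $\tilde g$ and $r$. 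The four cases are obtained by tuning the orders of the $\gamma_i$ and the choice of $\sigma$ so that $(g(C),r)$ equals $(2N,0)$, $(2N,2)$, $(2N-1,0)$ and $(2N-1,4)$ respectively, matching the parity of the genus to the admissible value of $r$.

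With the families in hand the statement follows from the Remark after Proposition \ref{ prop differenziale}. Since $s=4$ we have $\dim H^0(\om_{\tilde C}^{\otimes2})^{\tilde G}=1$, and the left-hand side of the inequality is a nonnegative integer because $H^0(\om_C)=H^0(\om_{\tilde C})^{+}$ embeds $\tilde G$-equivariantly in $H^0(\om_{\tilde C})$; thus the strict inequality of the Proposition forces that left-hand side to vanish, i.e.
\[
\dim\bigl(S^2H^0(\om_{\tilde C})\bigr)^{\tilde G}=\dim\bigl(S^2H^0(\om_{C})\bigr)^{\tilde G},
\]
and the whole one-parameter family then lies in a single fibre of the corresponding Prym map. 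To verify the inequality I would decompose $H^0(\om_{\tilde C})$ into $\tilde G$-isotypic pieces by the Chevalley--Weil formula, split it into the $\sigma$-invariant part $H^0(\om_C)$ and its anti-invariant complement, and compare the $\tilde G$-invariants of $S^2H^0(\om_{\tilde C})$ and of $S^2H^0(\om_C)$; the difference counts exactly the invariant quadratic forms that involve the anti-invariant differentials, which must be shown to be strictly fewer than the single available invariant bicanonical form.

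The main obstacle I expect is not any one of these steps in isolation but producing a single infinite list of monodromy data, one tuple per $N$, that simultaneously keeps $s=4$, realizes the prescribed $(g(C),r)$ in all four cases, and satisfies the Chevalley--Weil inequality uniformly in $N$. The delicate bookkeeping is tracking how the anti-invariant differentials contribute to $\bigl(S^2H^0(\om_{\tilde C})\bigr)^{\tilde G}$ and confirming that this contribution never reaches the bound imposed by $\dim H^0(\om_{\tilde C}^{\otimes2})^{\tilde G}=1$; controlling the fixed-point count of $\sigma$ so as to land on exactly $r=0,2,4$ with the correct genus parity is the second point requiring care.
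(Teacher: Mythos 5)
Your overall strategy is the right one and it is the same as the paper's: work with Prym data $(\tilde G,\tilde\theta,\sigma)$ for Galois covers of $\PP$ branched at $s=4$ points, so that $\dim H^0(\om_{\tilde C}^{\otimes2})^{\tilde G}=s-3=1$, observe that the strict inequality of Proposition \ref{ prop differenziale} then forces $\dim(S^2V_-)^{\tilde G}=\tilde N-N=0$ (since it is a nonnegative integer smaller than $1$), and conclude that the whole one-parameter family sits in a single fibre. That reduction is correct.

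However, the proposal stops exactly where the actual mathematical content of the theorem begins. The statement is an existence claim for \emph{every} $N$ and for each of the four targets, and your argument never exhibits the groups or the monodromy tuples: you write that you ``would fix, for every $N$, a finite group $\tilde G$ of order growing with $N$ together with a $4$-tuple of local monodromies'' and you yourself flag the production of such an infinite list as the main obstacle --- but that obstacle is the proof. Without explicit data one cannot run Riemann--Hurwitz to land on $(g,r)=(2N,0),(2N,2),(2N-1,0),(2N-1,4)$, nor verify via Chevalley--Weil that $(S^2H^0(\om_{\tilde C})_-)^{\tilde G}=0$. The paper closes this gap by writing down five explicit infinite classes of abelian data, e.g.\ for the case $r=2$: $k=2N+1$, $\tilde G=C_2\times C_{2k}$, monodromy matrix $A=\begin{pmatrix}k&0&0&k\\0&k&2&k-2\end{pmatrix}$, $\sigma=(k,k)^t$, and then computes each character-eigenspace dimension by the formula $d_n=-1+\sum_{i=1}^4\fracpart{-\alpha_i/(2k)}$ with $\alpha=n\cdot A$, obtaining $\tilde g=2k-2$, $g=k-1$, $r=2$ and $\dim(S^2H^0(\om_{\tilde C}))^{\tilde G}=\dim(S^2H^0(\om_{\tilde C})_+)^{\tilde G}=N$, hence $(S^2V_-)^{\tilde G}=0$; the other four classes are handled by analogous tables of data. (The paper also observes, which your sketch does not, that in all these examples $C$ is forced to be hyperelliptic, the order-two element of $G=\tilde G/\langle\sigma\rangle$ giving the hyperelliptic involution.) To complete your proof you must supply such an explicit family of data for each $N$ and each of the four cases and carry out the eigenspace computation; as it stands the argument is a correct reduction plus an unproved existence assertion.
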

Easily we show that (unfortunately) all such families arise as coverings of curves $C$ lying in the hyperelliptic locus.\\

In general, starting from a family of curves, one can construct fibrations that have the curves of the family as fibres. In particular, we focus on those obtained from families under the assumption of Proposition \ref{ prop differenziale}. At the same time, we generalize to families of Galois coverings $\tilde C\ra \tilde C/\tilde G\cong \PP$ (with fixed genus $\tilde g:=g(\tilde C)$ and monodromy) admitting  Galois intermediate quotient $\tilde C\ra C$ of degree $d\geq 2$. Accordingly, we give the definitions of Prym variety $P(\tilde C, C)$ and of higher degree Prym map $\mathcal{P}_{g,r}(d)$.

First, we show that a higher-degree analogue of Proposition \ref{ prop differenziale} holds. Indeed we prove the following:
\begin{prop}\label{porp differenziale higher degree}
	If $\dim(S^2H^0(\om_{\tilde C}))^{\tilde G}- \dim(S^2H^0(\om_{C}))^{\tilde G}<\dim H^0(\om_{\tilde C}^{\otimes2})^{\tilde G} $ then the differential of the Prym map  $\mathcal{P}_{g,r}(d)$ along the family is not injective. Hence the Prym map has positive dimensional fibres along the family.
\end{prop}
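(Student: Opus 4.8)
The plan is to mimic the proof of the degree-two case, Proposition~\ref{ prop differenziale}. I would compute the codifferential of $\mathcal{P}_{g,r}(d)$ along the family and show that the displayed inequality prevents it from being surjective, so that $d\mathcal{P}_{g,r}(d)$ fails to be injective and the fibres are positive dimensional. Everything is carried out $\tilde G$-equivariantly: the family consists of $\tilde G$-Galois covers of $\PP$ with $\tilde g$ and the monodromy fixed, and the intermediate quotient $C=\tilde C/H$ is taken with $H$ normal in $\tilde G$, so that $H^0(\om_C)=H^0(\om_{\tilde C})^H$ is a $\tilde G$-stable subspace of $H^0(\om_{\tilde C})$.

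First I would identify the spaces entering the codifferential. By the deformation theory of Galois covers of the line, the tangent space to the family at $[\tilde C\ra C\ra\PP]$ is $H^1(\tilde C,T_{\tilde C})^{\tilde G}$, which Serre duality identifies with the dual of $H^0(\om_{\tilde C}^{\otimes2})^{\tilde G}$, the space of $\tilde G$-invariant quadratic differentials. The two Torelli multiplication maps, for $\tilde C$ and for $C$, sit in the commuting square
\[
\begin{CD}
(S^2 H^0(\om_C))^{\tilde G} @>>> H^0(\om_C^{\otimes2})^{\tilde G}\\
@VVV @VVV\\
(S^2 H^0(\om_{\tilde C}))^{\tilde G} @>>> H^0(\om_{\tilde C}^{\otimes2})^{\tilde G}
\end{CD}
\]
whose vertical maps are induced by the inclusion $H^0(\om_C)\hookrightarrow H^0(\om_{\tilde C})$ and by pullback of quadratic differentials.

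Exactly as in the degree-two case, the codifferential of $\mathcal{P}_{g,r}(d)$ along the family is the cup-product map obtained from the bottom arrow after removing the contribution of $C$ recorded by the top arrow; its source is a $\tilde G$-invariant complement of $(S^2 H^0(\om_C))^{\tilde G}$ inside $(S^2 H^0(\om_{\tilde C}))^{\tilde G}$, hence has dimension $\dim(S^2H^0(\om_{\tilde C}))^{\tilde G}-\dim(S^2H^0(\om_{C}))^{\tilde G}$, while its target is $H^0(\om_{\tilde C}^{\otimes2})^{\tilde G}$. The differential $d\mathcal{P}_{g,r}(d)$ is injective along the family precisely when this map is surjective. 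Under the hypothesis the source has strictly smaller dimension than the target, so surjectivity is impossible; therefore $d\mathcal{P}_{g,r}(d)$ is not injective and the fibres of $\mathcal{P}_{g,r}(d)$ through the family are positive dimensional.

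The main obstacle is the one genuinely new point with respect to $d=2$: the honest identification of this codifferential. For an involution the $\pm$-eigenspace splitting of $H^0(\om_{\tilde C})$ peels off the Prym cotangent space and its symmetric square at once; for general $d$ one must instead argue with the full isotypic decomposition of $H^0(\om_{\tilde C})$ as a $\tilde G$-representation, check that the definition of $P(\tilde C,C)$ and of $\mathcal{P}_{g,r}(d)$ makes the cotangent contribution of the Prym factor agree with the complement of $(S^2 H^0(\om_C))^{\tilde G}$ appearing above, and verify that the resulting cup-product map is well defined and $\tilde G$-equivariant. Once this dictionary between the representation theory of $\tilde G$ and the (co)tangent spaces of $\mathcal{P}_{g,r}(d)$ is in place, the dimension count concludes the proof verbatim as in Proposition~\ref{ prop differenziale}.
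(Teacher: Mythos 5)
Your argument is correct and follows essentially the same route as the paper: the paper defines $V_-:=H^{1,0}(P)$, identifies the codifferential with the restriction of the Torelli multiplication map $m\restr{(S^2V_-)^{\tilde G}}:(S^2V_-)^{\tilde G}\to H^0(\om_{\tilde C}^{\otimes 2})^{\tilde G}$, and concludes by the same dimension count $\tilde N-N<s-3$, exactly as in the degree-two case. The only point you flag as an ``obstacle'' --- that the source is a $\tilde G$-invariant complement of $(S^2H^0(\om_C))^{\tilde G}$ of dimension $\tilde N-N$ --- is settled by observing that $K$ acts trivially on $V_+$ and without invariants on $V_-$, so $(V_+\otimes V_-)^{\tilde G}\subseteq (V_+\otimes V_-)^{K}=0$; the paper takes this for granted.
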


It appears that fibrations $h: S\ra B$ obtained from families under the assumption of Proposition \ref{porp differenziale higher degree} are quite interesting. Indeed, using them, we produce all the known counterexamples to a conjecture of Xiao. 

In \cite{xiao}, Xiao proved that a non trivial fibration with base curve $B\cong \PP$, general fibre of genus $\tilde g$ and irregularity $q$, satisfies $q\le\frac{{\tilde g}+1}2.$
Furthermore, for $g(B)>0$, he conjectured that the relative irregularity of the fibration $q_h$ satisfies $q_h\le\frac{{\tilde g}+1}2.$ The four known counterexamples have been constructed in \cite{pirola} and in \cite{albano pirola} as fibrations associated with families of cyclic prime odd \'etale covers of hyperelliptic curves (elliptic curves in case of \cite{pirola}) carrying a non-injective differential $d\mathcal{P}_{g,r}(d)$. in particular, the three examples of \cite{albano pirola} fit perfectly the structure of our families: a Theorem by Ries (\cite{ries}) guarantees that they yield dihedral Galois cover of $\PP$, i.e.\ they provide towers $\tilde C\xrightarrow{d:1} C\ra \tilde C/D_d\cong \PP$. 

In light of Proposition \ref{porp differenziale higher degree}, it seemed natural to us to check if there exist other families of Galois coverings $\tilde C\xrightarrow{d:1} C\ra \PP$ with non-injective differential and which disprove the conjecture. Notice that we do not require $C$ to be hyperelliptic and we admit any Galois intermediate quotient  $\tilde C\xrightarrow{d:1}  C$ of any degree.  By means of computer calculations (our \verb|MAGMA| script is available at \textit{http://mate.unipv.it/grosselli/publ/}), we show the following
\begin{prop}\label{controesempi Xiao}
	Up to $\tilde g=12, s=14$ (i.e. dimension 11), the only positive dimensional families of Galois towers $\tilde C\xrightarrow{d:1} C\ra \PP$ carrying non-injective differential and disproving Xiao's conjecture are the one of \cite{pirola} and the ones of \cite{albano pirola}.
\end{prop}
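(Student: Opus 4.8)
The statement is a finiteness-plus-enumeration result, so my plan is to turn it into a bounded exhaustive search and then read off the output; all the analytic input has already been packaged into Proposition \ref{porp differenziale higher degree}. First I would record a positive-dimensional family of Galois covers $\tilde C\ra\PP$ with group $\tilde G$ by its monodromy datum, that is, an ordered generating vector $(c_1,\dots,c_s)\in\tilde G^s$ with $\prod_i c_i=1$ and $\langle c_1,\dots,c_s\rangle=\tilde G$, the conjugacy classes of the $c_i$ being the local monodromies at the $s$ branch points. Two data defining the same component of the Hurwitz space are related by the braid action of $\braid$ together with $\Aut(\tilde G)$, so I would enumerate covers up to this combined equivalence. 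The genus is fixed by Riemann--Hurwitz,
\[ 2\tilde g-2=|\tilde G|\Big(-2+\sum_{i=1}^s\big(1-\tfrac{1}{\mathrm{ord}(c_i)}\big)\Big), \]
and the family has dimension $s-3$, so positive-dimensionality is exactly $s\ge 4$. For $\tilde g\le 12$ and $4\le s\le 14$ this identity bounds $|\tilde G|$: the bracketed quantity is a positive sum of terms $1-1/m$ minus $2$, whose positive values are bounded away from $0$, so only finitely many groups occur and each admits finitely many generating vectors. This range is covered by the small-groups database, and the enumeration is genuinely finite.

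Next, for each such cover and each subgroup $H\le\tilde G$ I would form the intermediate Galois quotient $C=\tilde C/H$, giving a tower $\tilde C\xrightarrow{d:1}C\ra\PP$ with $d=|H|$. The test of Proposition \ref{porp differenziale higher degree} is purely representation-theoretic: the Chevalley--Weil formula computes the $\tilde G$-module structure of $H^0(\om_{\tilde C})$ directly from the monodromy datum, and from it one reads off $\dim(S^2H^0(\om_{\tilde C}))^{\tilde G}$, then $\dim(S^2H^0(\om_{C}))^{\tilde G}$ using $H^0(\om_C)=H^0(\om_{\tilde C})^{H}$, and finally $\dim H^0(\om_{\tilde C}^{\otimes 2})^{\tilde G}=s-3$ (invariant quadratic differentials descend to $H^0(\PP,\OO(s-4))$, each branch point contributing coefficient one). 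Proposition \ref{porp differenziale higher degree} then selects, by a single numerical inequality, exactly the towers along which $d\mathcal{P}_{g,r}(d)$ is non-injective.

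For each surviving tower, the associated fibration $h:S\ra B$ built as in the preceding construction has general fibre $\tilde C$, and since the family is contained in a fibre of $\mathcal{P}_{g,r}(d)$ the Prym part of the relative Jacobian is constant. By the theorem of the fixed part the relative irregularity equals the dimension of the fixed part of the variation; as the Jacobian of $C$ genuinely varies along the family, this fixed part is the Prym, so $q_h=\dim P=\tilde g-g(C)$, a quantity again extracted from the Chevalley--Weil data via $g(C)=\dim H^0(\om_{\tilde C})^{H}$. Xiao's conjecture is therefore disproved precisely when $\dim P>\frac{\tilde g+1}{2}$. Intersecting this inequality with the list produced in the previous step leaves a finite set of candidates, which I would run through the \verb|MAGMA| script over the entire range $\tilde g\le 12$, $s\le 14$. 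The output is exactly the four towers of \cite{pirola} and \cite{albano pirola}: one recognises the three dihedral towers $\tilde C\xrightarrow{d:1}C\ra\tilde C/D_d\cong\PP$ as those of \cite{albano pirola} (through Ries's theorem) and the remaining one as Pirola's.

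The main obstacle is not any individual computation but guaranteeing completeness and non-redundancy of the enumeration. I must bound $|\tilde G|$ rigorously over all admissible ramification profiles, generate all generating vectors up to the combined $\braid\times\Aut(\tilde G)$ action so that no connected component of any Hurwitz space is missed or double-counted, and confirm that the numerical reading $q_h=\dim P$ is the true relative irregularity and not merely a lower bound in the borderline cases where extra Hodge coincidences could, a priori, enlarge the fixed part. Once these finiteness and equivalence issues are secured, the classification reduces to reading off the verified output of the search.
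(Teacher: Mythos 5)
Your proposal is correct and follows essentially the same route as the paper: the paper's own proof likewise reduces the statement to the numerical criterion of Proposition \ref{proposizione diff non inj} combined with the inequality $q_h>\frac{\tilde g+1}{2}$, and delegates the bounded enumeration to a \texttt{MAGMA} search over monodromy data, which returns exactly the four known towers. The implementation details you spell out (Riemann--Hurwitz bounds on $|\tilde G|$, Chevalley--Weil computation of the $\tilde G$-module $H^0(\om_{\tilde C})$, equivalence of generating vectors under the braid and automorphism actions) are precisely what the cited script carries out, so your write-up is a faithful, more explicit account of the same argument.
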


The third example of \cite[Theorem 1.2]{albano pirola} is obtained via a degeneration argument. Therefore one of the four examples that we find with Proposition \ref{controesempi Xiao} is presented in a slightly different way from the original, although it is clearly the same. For this reason, we think it may be useful to give a very brief description of all the examples.

The paper is organized as follows: in Section \ref{sez 1} we give an overview on Prym maps while in Section \ref{sez 2} we explain our constructive method. Finally, in Section \ref{sez 3}, we recall something on Xiao fibrations and we describe the counterexamples. \\

{\bfseries \noindent{Acknowledgements}}. We are indebted to P. Frediani for suggesting to us the topic of this collaboration and for the many fruitful discussions we had. 
	\section{The state of the art}\label{sez 1}

In this section we would like to overview the literature on positive dimensional fibres of the Prym maps $\mathcal{P} _{g,r}: \mathcal{R}_{g,r}\ra\mathcal{A}_{g-1+\frac{r}{2}}^\delta $. We recall that \begin{equation}\label{dimensioni}
\dim \mathcal R_{g, r}=3g-3+r\quad \text{and}\quad  \dim \mathcal A_{g-1+\frac r2}^{\delta}=\frac{1}{2}\left(g-1+\frac{r}{2}\right)\left(g+\frac{r}{2}\right).
\end{equation} 

First, let us briefly recall the classical case, that is $r=0$ (the standard notation refers to $\mathcal R_{g, 0}$ as to $ \mathcal R_{g} $, the same for $\mathcal P_{g,0}$). Easily from \eqref{dimensioni} we see that the generic fibre of $\mathcal{P}_g$ is positive dimensional when $g\leq 5$. A detailed study of its geometric structure is provided by the works of Verra for $g=3$ (\cite{verra}), Recillas for $g=4$ (\cite{recillas}) and Donagi for $g=5$ (\cite{donagi}). Cases with $g=1, g=2$ are summarized in \cite[Section 6]{donagi}. When $g=6$ the fibre is generically finite of degree 27 (\cite{ds}). On the other hand, when $g\geq 7$, the Prym map is generically injective but never injective (see \cite{donagi} and references therein). 

The positive dimensional fibres of $\mathcal{P}_{g}$ are characterized as follows:
\begin{teo}[Mumford \cite{mumford}, Naranjo \cite{naranjo}]
	Assume $g\geq 13$. Then $\mathcal{P}_{g}$ has positive dimensional fibres at $(\tilde C, C)$ if and only if $C$ is either hyperelliptic or it belongs to one among the components of the bielliptic locus where $\tilde C$ carries $\mathbb{Z}/2\times \mathbb{Z}/2\subseteq \Aut(\tilde C)$. 
\end{teo}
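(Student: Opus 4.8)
The plan is to translate the statement about positive-dimensional fibres into an infinitesimal condition and then into the failure of surjectivity of an explicit multiplication map, after which the whole problem becomes one of extracting geometry from that failure. Since $\mathcal{P}_g$ is generically finite for $g\ge 7$, the fibre through $(\tilde C,C)$ is positive-dimensional exactly when $d\mathcal{P}_g$ drops rank at the general point of that fibre; so the core is to understand where $\ker d\mathcal{P}_g\neq 0$, and then to exhibit (or forbid) actual deformations. Writing $\sigma$ for the covering involution and decomposing $H^0(\tilde C,\om_{\tilde C})=H^0(\tilde C,\om_{\tilde C})^+\oplus H^0(\tilde C,\om_{\tilde C})^-$ into eigenspaces, one has $H^0(\tilde C,\om_{\tilde C})^+\cong H^0(C,\om_C)$ and $H^0(\tilde C,\om_{\tilde C})^-\cong H^0(C,\om_C\otimes\eta)$, the latter being the space of $1$-forms on the Prym $P$. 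Hence the cotangent space of $\mathcal{A}_{g-1}$ at $P$ is $S^2H^0(C,\om_C\otimes\eta)$, the cotangent space of $\mathcal{R}_g$ at $(C,\eta)$ is $H^0(C,\om_C^{\otimes 2})$ (quadratic differentials, dual to $H^1(C,T_C)$, as $\eta$ is rigid), and the codifferential is the cup-product map
\[
\mu:\; S^2H^0(C,\om_C\otimes\eta)\;\lra\; H^0(C,\om_C^{\otimes 2}),
\]
which lands in $\om_C^{\otimes 2}=(\om_C\otimes\eta)^{\otimes 2}$ because $\eta^{\otimes 2}\cong\OO_C$. Thus $d\mathcal{P}_g$ is injective iff $\mu$ is surjective, and the theorem reduces to classifying the curves $C$ for which the Prym-canonical multiplication map $\mu$ fails to be surjective. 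This is the ungraded version of Proposition \ref{ prop differenziale}.

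For the forward (``only if'') direction I would analyze the non-surjectivity of $\mu$ geometrically. The Prym-canonical system $|\om_C\otimes\eta|$ is base-point-free of degree $2g-2$ and projective dimension $g-2$, giving a map $\varphi:C\to\mathbb{P}^{g-2}$, and a nonzero element of $\operatorname{coker}\mu$ records a quadratic differential orthogonal to the image of $\mu$. To force geometry on $C$, I would run a base-point-free pencil trick: take a pencil $|A|$ computing the gonality (or Clifford index) of $C$, express the kernel and cokernel of $\mu$ through $H^0$ and $H^1$ of the twists $(\om_C\otimes\eta)\otimes A^{\pm 1}$, and combine this with Clifford's theorem. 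The expected outcome is that, once $g\ge 13$, non-surjectivity can occur only if $C$ carries a pencil of very small degree, i.e.\ $C$ has small Clifford index: hyperelliptic, trigonal, bielliptic, or plane-quintic-type. Comparing with the sharp surjectivity that is guaranteed in the complementary range (a Prym-canonical analogue of the Noether--Petri theorem, valid for large $g$) would then eliminate every case except the hyperelliptic one and the bielliptic one; within the bielliptic locus I would isolate exactly the components on which $\tilde C$ carries $\mathbb{Z}/2\times\mathbb{Z}/2\subseteq\Aut(\tilde C)$, since these are precisely the ones where both the covering involution and (a lift of) the bielliptic involution act, producing a genuine positive-dimensional drop rather than an isolated degeneracy of $\mu$.

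For the converse (``if'') direction I would produce the families explicitly. When $C$ is hyperelliptic, the pair $(C,\eta)$ is encoded by a partition of the $2g+2$ branch points of the hyperelliptic $g^1_2$, and the Prym is isogenous to a product of Jacobians of the two hyperelliptic curves attached to this partition (the allied/bigonal construction); one then checks that a positive-dimensional family of pairs $(C,\eta)$ maps to the same decomposable Prym, so the fibre is positive-dimensional. When $C$ is bielliptic with $\mathbb{Z}/2\times\mathbb{Z}/2\subseteq\Aut(\tilde C)$, the three intermediate quotients of $\tilde C$ decompose $P$ up to isogeny into pieces coming from the bielliptic quotients, and deforming the bielliptic structure while keeping these isogeny factors and the polarization fixed again sweeps out a positive-dimensional fibre.

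The main obstacle is the forward direction, and specifically obtaining the dichotomy \emph{uniformly for all $g\ge 13$}. Translating the cohomological statement ``$\mu$ not surjective'' into the exact geometric classification requires a sharp effective surjectivity theorem for Prym-canonical multiplication maps (the analogue of Noether's theorem for $\om_C\otimes\eta$ rather than $\om_C$), together with a delicate Clifford-index case analysis that rules out precisely the borderline special loci --- the trigonal, plane-quintic, and generic bielliptic curves without the extra involution --- where the naive dimension count of $\operatorname{coker}\mu$ does not by itself decide surjectivity. Controlling these intermediate cases, and pinning down exactly which bielliptic components survive, is where the hypothesis $g\ge 13$ and the bulk of the work of \cite{mumford} and \cite{naranjo} enter.
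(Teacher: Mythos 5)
First, a point of context: the paper does not prove this theorem at all --- it is quoted as a known result of Mumford and Naranjo, with the proof delegated to \cite{mumford} and \cite{naranjo}. So there is no internal proof to compare against; I can only measure your plan against what those proofs actually have to accomplish.

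Your infinitesimal setup is correct and consistent with the codifferential description the paper uses elsewhere (the multiplication map $\mu\colon S^2H^0(C,\om_C\otimes\eta)\to H^0(C,\om_C^{\otimes 2})$). The genuine gap is the sentence ``the theorem reduces to classifying the curves $C$ for which the Prym-canonical multiplication map $\mu$ fails to be surjective.'' It does not. Non-surjectivity of $\mu$, i.e.\ non-injectivity of $d\mathcal{P}_g$, is only a \emph{necessary} condition for the fibre through $(C,\eta)$ to be positive dimensional, and the locus where it holds is strictly larger than the locus in the statement: it contains, for every $g$, the whole trigonal locus (the Prym-canonical model of a trigonal curve lies on a two-dimensional scroll, which already carries more quadrics than the expected $\binom{g}{2}-(3g-3)$, so $\operatorname{coker}\mu\neq 0$ there), the plane quintics, and \emph{all} bielliptic components, not only those where $\tilde C$ carries $\ZZ/2\times\ZZ/2$. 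Your strategy for the ``only if'' direction is to eliminate these extra cases by a sharp surjectivity theorem for Prym-canonical multiplication; this cannot work, because on those loci $\mu$ is genuinely non-surjective --- there is nothing for a surjectivity theorem to eliminate. What is actually required, and what constitutes the bulk of Naranjo's paper, is a global argument showing that on the trigonal, plane-quintic and remaining bielliptic loci the fibres are nevertheless finite (via Recillas' trigonal construction, the analysis of low-rank quadrics through the Prym-canonical curve, and a case-by-case study of when an infinitesimal kernel direction integrates to an actual deformation with constant Prym). You name these ``borderline special loci'' in your final paragraph, but the mechanism you propose for disposing of them is the wrong one, and no substitute is supplied. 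The ``if'' direction (Mumford's decomposition $P\cong JC_1\times JC_2$ for hyperelliptic $C$, plus the dimension count, and the bielliptic families with the extra involution) is essentially right as a sketch.
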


Now we focus on the ramified cases, i.e.\ $r>0$. The inequality $\dim \mathcal R_{g, r}> \dim \mathcal A_{g-1+\frac r2}^{\delta}$ is satisfied only in six cases, that is $r=2$ with $1\leq g\leq 4$ and $r=4$ with $1\leq g\leq 2$. All of them are considered in \cite{fns}. Indeed it is shown the following:
\begin{prop}(\cite[Proposition 1.2 and Corollary 1.3]{fns})\label{Fibre dim pos gen bassi}
	Under the assumptions \[
	(g,r) \in \{(1,2),(1,4),(2,2),(2,4),(3,2),(4,2)\},
	\]
	the ramified Prym map  $\mathcal P_{g,r}$ is dominant. Therefore the generic fibre $F_{g,r}$ of $\mathcal P_{g,r}$ has $
	\dim F_{1,2}=1, \, \dim F_{2,2}=2, \,\dim F_{3,2}=2,\dim F_{4,2}=1, \,\dim F_{1,4}=1, \,\dim F_{2,4}=1.$ 
\end{prop}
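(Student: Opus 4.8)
The plan is to reduce the whole statement to the single assertion that $\mathcal{P}_{g,r}$ is \emph{dominant} in each of the six cases. Once dominance is known, the values $\dim F_{g,r}$ are forced: in all six cases \eqref{dimensioni} gives the strict inequality $\dim \mathcal{R}_{g,r} > \dim \mathcal{A}_{g-1+\frac r2}^{\delta}$, and for a dominant morphism of irreducible varieties the general fibre has dimension $\dim \mathcal{R}_{g,r} - \dim \mathcal{A}_{g-1+\frac r2}^{\delta}$. Substituting the two formulas of \eqref{dimensioni} and computing the difference in each case reproduces exactly the six numbers $1,2,2,1,1,1$, so no further work is needed there. Hence the entire content is dominance.

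To prove dominance it suffices, working over $\C$, to exhibit for each pair $(g,r)$ one smooth point $(C,\eta,B)$ at which $d\mathcal{P}_{g,r}$ is surjective, since surjectivity of the differential at a single point makes the image contain a Zariski-open set. I would treat surjectivity dually. Using the standard identifications of the codifferential of the ramified Prym map, namely $T^*_{P}\mathcal{A}_{g-1+\frac r2}^{\delta} \cong S^2 H^0(C,\omega_C\otimes\eta)$ (via the anti-invariant differentials $H^0(P,\Omega^1_P)\cong H^0(C,\omega_C\otimes\eta)$) and $T^*_{(C,\eta,B)}\mathcal{R}_{g,r}\cong H^0(C,\omega_C^{\otimes 2}(B))=H^0\bigl(C,(\omega_C\otimes\eta)^{\otimes 2}\bigr)$, the codifferential of $\mathcal{P}_{g,r}$ is the multiplication map
\[
\mu\colon S^2 H^0(C,\omega_C\otimes\eta)\longrightarrow H^0\bigl(C,(\omega_C\otimes\eta)^{\otimes 2}\bigr).
\]
Because $\eta^{-1}$ has negative degree for $r>0$ one has $h^0(\omega_C\otimes\eta)=g-1+\frac r2$, so the source of $\mu$ has dimension $\frac12(g-1+\frac r2)(g+\frac r2)=\dim\mathcal{A}_{g-1+\frac r2}^{\delta}$, and $d\mathcal{P}_{g,r}$ is surjective precisely when $\mu$ is injective. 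The problem thus reduces to producing, for each of the six cases, one triple for which $\mu$ is injective.

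Geometrically, writing $L:=\omega_C\otimes\eta$, the kernel of $\mu$ is the space of quadrics vanishing on the image of the Prym-canonical map $\phi_L\colon C\to\mathbb{P}^{g-2+r/2}$, so injectivity of $\mu$ means that this image lies on no quadric. For $g=1$ this is immediate: in case $(1,2)$ the target is a point and $\mu$ is $x\mapsto x^2$, while in case $(1,4)$ the map $\phi_L$ is the degree-$2$ cover $C\to\PP$, whose image cannot lie in the two points cut by a quadric on $\PP$. For $(2,2)$ the bundle $L$ has degree $3$, and for a non-effective $\eta$ it is base-point-free with $\phi_L\colon C\to\PP$ surjective, so $\mu$ is again injective. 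The substantive cases are $(2,4)$, $(3,2)$ and $(4,2)$, where $\phi_L$ sends $C$ into $\mathbb{P}^2$, $\mathbb{P}^2$ and $\mathbb{P}^3$ as a curve of degree $4$, $5$ and $7$ respectively, and one must verify that for a suitable choice of $(C,\eta,B)$ the image lies on no conic, respectively no quadric.

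The main obstacle is exactly this verification in the three substantive cases. My preferred route is to combine a base-point-free pencil trick with a degeneration argument: exhibit one explicit triple—for instance a curve carrying a convenient $g^1_d$ together with $\eta$ supported at general points, or a controlled nodal or reducible degeneration of $(C,\eta,B)$—for which $\ker\mu$ can be computed directly and shown to vanish, and then invoke semicontinuity. Since injectivity of $\mu$ is an open condition on $\mathcal{R}_{g,r}$, a single good example in each of the three cases suffices; dominance follows at the general point, and with it the fibre dimensions recorded above.
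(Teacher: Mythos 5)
First, a point of comparison: the paper does not prove this Proposition at all --- it is imported verbatim from \cite[Proposition 1.2 and Corollary 1.3]{fns}, so there is no internal proof to measure your attempt against. That said, your strategy is the natural one and is consistent with the framework the paper uses elsewhere (the codifferential of $\mathcal P_{g,r}$ as the multiplication map of \eqref{codifferenziale}). Your dimension count from \eqref{dimensioni} is correct in all six cases; the reduction of dominance to injectivity of $\mu\colon S^2H^0(C,L)\to H^0(C,L^{\otimes 2})$ at a single point is correct, since $h^0(L)=g-1+\frac r2$ gives $\dim S^2H^0(L)=\dim\mathcal A_{g-1+\frac r2}^{\delta}$; and the cases $(1,2)$, $(1,4)$, $(2,2)$ are indeed immediate as you describe (modulo a word on irreducibility of $\mathcal R_{g,r}$, which you need both for ``surjective differential at one point implies dominant'' and for ``the generic fibre'' to be well defined).

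The gap is that the three cases you yourself label substantive are planned rather than proved. For $(2,4)$ and $(3,2)$ you still owe the verification that $\phi_L\colon C\to\mathbb{P}^2$ is birational onto its image for a general triple: if $\phi_L$ were $2{:}1$ onto a conic, that conic would lie in $\ker\mu$, so the observation ``an irreducible plane curve of degree $\geq 3$ lies on no conic'' only closes the argument after birationality is checked. The case $(4,2)$ is the real sticking point: curves of genus $4$ and degree $7$ lying on a quadric in $\mathbb{P}^3$ do exist (divisors of type $(2,5)$ on $\mathbb{P}^1\times\mathbb{P}^1$ have degree $7$ and genus $(2-1)(5-1)=4$), so the assertion that the general Prym-canonical model lies on no quadric has genuine content and cannot be dispatched by degree considerations. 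One must actually exhibit the triple $(C,\eta,B)$ with $\ker\mu=0$ (or carry out the degeneration and semicontinuity argument you allude to) --- as written, the proposal identifies exactly the right statement to prove and then stops where the work begins.
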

Hence the paper gives a detailed description of the generic fibre for all the six cases (see \cite[Theorem 0.1]{fns}). 

Let us now focus on $\dim \mathcal R_{g, r}\leq\dim \mathcal A_{g-1+\frac r2}^{\delta}.$ The first result is the following: 
\begin{teo}[Marcucci-Pirola \cite{mp}, Marcucci-Naranjo \cite{mn}, Naranjo-Ortega \cite{naranjo-ortega}]\label{teo gen iniett}
	The ramified Prym map is generically injective as far as the dimension of $\mathcal R_{g, r}$ is less than or equal to the dimension of $\mathcal A_{g-1+\frac r2}^{\delta}.$
\end{teo}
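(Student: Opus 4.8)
The plan is to split the claim into an infinitesimal statement and a global one. Generic injectivity is equivalent to saying that, over a dense open subset of $\rgr$, the Prym map is an immersion whose fibres are single reduced points. Since by hypothesis $\dim\rgr\le\dim\mathcal A_{g-1+\frac r2}^\delta$, once I know that the differential $d\mathcal P_{g,r}$ is injective at a generic point the map is automatically generically finite onto its image; the whole difficulty then concentrates in showing that the generic fibre consists of a single point.

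First I would establish the infinitesimal Torelli statement, i.e.\ generic injectivity of $d\mathcal P_{g,r}$. Writing $\sigma$ for the covering involution of $f:\tilde C\ra C$ and $H^0(\om_{\tilde C})^-$ for its $(-1)$-eigenspace, which is canonically $H^0(P,\Omega^1)$, the cotangent space to $\mathcal A_{g-1+\frac r2}^\delta$ at $P(\tilde C,C)$ is $S^2H^0(\om_{\tilde C})^-$ and the codifferential of $\mathcal P_{g,r}$ is the cup-product/multiplication map
\[
\mu:\ S^2H^0(\om_{\tilde C})^-\lra H^0(\om_{\tilde C}^{\otimes 2})^{+},
\]
whose image lies in the $\sigma$-invariant quadratic differentials, which encode the deformations of the triple $(C,\eta,B)$. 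Thus $d\mathcal P_{g,r}$ is injective exactly when $\mu$ is surjective, and I would prove this surjectivity for a general covering by means of the base-point-free pencil trick applied to the (anti-invariant) Prym-canonical system, together with standard surjectivity and projective-normality results of Green--Lazarsfeld type. The finitely many small values of $(g,r)$ left over by these general criteria I would dispatch by a direct, possibly computer-assisted, computation.

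The heart of the proof is the passage from generic finiteness to degree one, and this I would carry out by induction on the pair $(g,r)$ via a degeneration argument. Choosing a one-parameter family in which $C$ acquires a node --- that is, a generic boundary point of $\overline{\rgr}$ --- the covering $f$ specialises to an admissible cover and, inside a toroidal compactification of $\mathcal A_{g-1+\frac r2}^\delta$, the Prym variety degenerates to a semiabelian variety whose abelian part is a Prym of strictly smaller numerical type. Applying the inductive hypothesis to this lower Prym and recovering the remaining toric/extension data from the vanishing-cycle information attached to the node, one reconstructs the degenerate covering uniquely; openness of the locus where the fibre is a single reduced point, together with upper-semicontinuity of fibre dimension, then propagates generic injectivity from the boundary back into the interior of $\rgr$.

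I expect this last step to be the main obstacle: identifying the limit Prym precisely as a semiabelian variety, keeping track of its extension class, and verifying that the lower Prym together with this class determines the cover is exactly the delicate point, and it is also where the hypothesis $\dim\rgr\le\dim\mathcal A_{g-1+\frac r2}^\delta$ must be used to exclude hidden positive-dimensional or multi-sheeted fibres in the interior. The induction bottoms out at small $(g,r)$, treated by hand; in the literature the various sub-ranges of the hypothesis are precisely what is apportioned among \cite{mp}, \cite{mn} and \cite{naranjo-ortega}, whose combination yields the full statement.
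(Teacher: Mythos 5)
First, a point of reference: the paper does not prove this theorem. It appears in the survey Section \ref{sez 1} as a statement quoted from \cite{mp}, \cite{mn} and \cite{naranjo-ortega}, so there is no internal proof to measure your argument against; what follows compares your outline with what those papers, and the surrounding text of this one, actually contain.

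Your infinitesimal step is sound and consistent with the paper's own setup: the codifferential is the multiplication map $S^2H^0(C,\omega_C\otimes\eta)\to H^0(C,\omega_C^2\otimes\OO(B))$ of \eqref{codifferenziale}, equivalently $S^2H^0(\omega_{\tilde C})^-\to H^0(\omega_{\tilde C}^{\otimes 2})^+$, and its surjectivity for a general cover does follow from the Green--Lazarsfeld criterion, exactly as in the proposition the paper proves right after stating this theorem. The genuine gap is in your second step, which is where essentially all of the content of the cited papers lives: you describe a degeneration-and-induction scheme but carry out none of its components --- identification of the limit Prym as a semiabelian variety, reconstruction of the admissible cover from the lower Prym together with the extension class, and the base cases --- and you flag these yourself as the main obstacles. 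In addition, the mechanism you invoke to conclude is stated too loosely to work as written: generic injectivity does not ``propagate from the boundary back into the interior'' by openness and semicontinuity of fibre dimension. The standard argument requires a proper extension of the Prym map to suitable (partial) compactifications, Zariski's main theorem or Stein factorization to bound the cardinality of a special fibre by the generic degree $d$, and --- the delicate point --- a proof that the $d$ sheets over a general interior point do not collide at the chosen boundary point; without that, a reduced one-point fibre on the boundary does not force $d=1$. Finally, the three references are not three instalments of one inductive ladder: they partition the range of $(g,r)$ allowed by the dimension count and use partly different techniques (degeneration for the bulk of the range in \cite{mp}, the elliptic-base case in \cite{mn}, and the remaining low-ramification cases in \cite{naranjo-ortega}), so ``combining'' them is a case division rather than a single induction. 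As it stands, your text is a plausible research plan, not a proof.
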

Actually, the equality between the dimensions is reached only in the case of $g=3$ and $r=4$ where more it is known:
\begin{teo}[Nagaraj-Ramanan \cite{nr}]
	Let $g\geq3$. The Prym map $\mathcal P_{g, 4}$ restricted to the locus of tetragonal curves has generically degree 3.
\end{teo}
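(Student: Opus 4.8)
The plan is to locate the number three in the group theory of the tetragonal pencil, via Donagi's tetragonal construction, and then to check that this triple exhausts the generic fibre. First I would reduce to a count of points in a generic fibre by establishing that the restriction of $\mathcal P_{g,4}$ to the tetragonal locus $\mathcal R_{g,4}^{\mathrm{tetr}}$ is generically finite onto its image. This is \emph{not} a consequence of generic injectivity: the tetragonal locus is precisely (part of) the special locus where injectivity fails, and for $g=3$ — where every curve is tetragonal and $\dim\mathcal R_{3,4}=\dim\mathcal A_4^\delta=10$ — it is the whole space. So I would instead compare $\dim\mathcal R_{g,4}^{\mathrm{tetr}}$ with the dimension of its image in $\mathcal A_{g+1}^\delta$, checking equality by a Riemann--Hurwitz count for the degree-four pencil together with the dimensions of the choices of $\eta\in\Pic^2(C)$ and of the branch divisor $B\in|\eta^{\otimes2}|$; generic finiteness then follows.

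Second, I would produce three points of the generic fibre by a triality. For a generic tetragonal curve $\phi\colon C\to\PP$ the monodromy of the degree-four pencil is the full symmetric group $S_4$, and I form the Galois closure $\hat C\to\PP$. The Klein four-group $V_4\triangleleft S_4$, with quotient $S_4/V_4\cong S_3$ — equivalently the three partitions of the four sheets into two unordered pairs — is the combinatorial engine of Donagi's tetragonal construction (\cite{donagi}): from the tower $\tilde C\xrightarrow{2:1}C\xrightarrow{4:1}\PP$, where $\tilde C\to C$ is the double cover branched along $B$, one constructs two further towers $\tilde C'\to C'\to\PP$ and $\tilde C''\to C''\to\PP$. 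The key assertion — which I would prove by decomposing $J\hat C$ (and its branched analogue) into $S_4$-isotypic components and identifying the Prym with the component attached to one fixed irreducible representation, symmetric under the $S_3$ of pairings — is that the three towers carry the same Prym variety, together with its polarization of type $\delta$. Hence the generic fibre contains at least the three generically distinct triples $(C,\eta,B)$, $(C',\eta',B')$, $(C'',\eta'',B'')$.

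Finally, and this is the heart of the matter, I would show there are no other points: a generic $\delta$-polarized abelian variety $P$ in the image arises from exactly these three towers. This amounts to reconstructing from $P$ the pencil $\phi\colon C\to\PP$, the branch divisor, and the $S_4$-cover $\hat C$, and proving that the only remaining freedom is the choice among the three pairings, i.e.\ the $S_3$-ambiguity itself. The main obstacle is precisely this converse: on one hand controlling genericity (that the pencil has full $S_4$-monodromy and that the three towers are pairwise non-isomorphic, so that the fibre really has three and not fewer elements), and on the other hand proving that the $\delta$-polarized Prym retains enough of the tetragonal structure to be reconstructed up to the triality, so that no extra components of the fibre can appear.
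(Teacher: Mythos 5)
First, a point of comparison: the paper does not prove this statement at all --- it is quoted verbatim from Nagaraj--Ramanan \cite{nr} as part of the survey in Section \ref{sez 1}, so there is no internal proof to measure your sketch against. Judged on its own, your proposal correctly locates the number $3$ in the $S_3\cong S_4/V_4$ of pairings behind Donagi's tetragonal construction, but as written it is a programme rather than a proof, and it has one concrete error of setup. You take ``tetragonal locus'' to mean the locus of triples with $C$ carrying a $g^1_4$. For $g=3$ every non-hyperelliptic curve is a plane quartic and acquires a $g^1_4$ by projection from an external point, so your tetragonal locus would be dense in $\mathcal R_{3,4}$; but $\dim\mathcal R_{3,4}=\dim\mathcal A_4^\delta=10$ and the Marcucci--Pirola/Naranjo--Ortega theorem quoted in the same section asserts that $\mathcal P_{3,4}$ is then \emph{generically injective}, which would contradict generic degree $3$. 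The locus in \cite{nr} is a condition on the whole triple $(C,\eta,B)$, not just on $C$: the double cover must be branched along a divisor of the pencil, i.e.\ $\eta^{\otimes 2}\cong\OO_C(B)$ is itself the $g^1_4$. This compatibility is not cosmetic --- it is exactly what makes the ramified version of the tetragonal construction work, since the construction manipulates the fibres of $\tilde C\to\PP$ over points of the base and needs the branching of $\tilde C\to C$ to sit inside one such fibre.

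Beyond that, the three load-bearing steps are announced rather than carried out. (i) Your generic-finiteness argument is circular: a Riemann--Hurwitz count computes the dimension of the \emph{source} locus, not of its image in $\mathcal A_{g+1}^\delta$; to bound the image from below you need the rank of $d\mathcal P_{g,4}$ restricted to the tangent space of the tetragonal locus, which is a separate multiplication-map computation of the type \eqref{codifferenziale}. (ii) That the two new towers produced by the construction carry the \emph{same} $\delta$-polarized Prym requires an actual isogeny/polarization argument in the ramified setting (an isomorphism of abelian varieties is not enough; the polarization type $(1,\dots,1,2,\dots,2)$ must be matched), and your appeal to an $S_4$-isotypic decomposition of $J\hat C$ is only a heading for that argument. (iii) Most importantly, you yourself flag the converse --- that the generic fibre has exactly three points and no more --- as ``the heart of the matter'' and leave it entirely open. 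Since that is precisely the content that distinguishes ``degree $3$'' from ``degree $\ge 3$'', the proposal does not establish the theorem; it reproduces the expected strategy of \cite{nr} up to, but not including, its hard part.
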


Quite recently Theorem \ref{teo gen iniett} has been improved:

\begin{teo}[Ikeda \cite{ikeda} for $g=1$, Naranjo-Ortega \cite{naranjo-ortega2} for all $g$]
	The Prym map $\mathcal P_{g, r}$ is injective with injective differential for all $r\geq6$ and $g>0$.
\end{teo}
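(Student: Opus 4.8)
The plan is to split the statement into two independent claims: (i) the differential $d\mathcal{P}_{g,r}$ is injective at every point, and (ii) $\mathcal{P}_{g,r}$ is set-theoretically injective. Throughout, fix a double cover $f:\tilde C\ra C$ corresponding to a triple $(C,\eta,B)$, let $\sigma$ be the associated involution of $\tilde C$, and decompose $H^0(\om_{\tilde C})=H^0(\om_{\tilde C})^+\oplus H^0(\om_{\tilde C})^-$ into its $\sigma$-invariant and $\sigma$-anti-invariant parts. Since $f_*\om_{\tilde C}=\om_C\oplus(\om_C\otimes\eta)$, one has the canonical identifications $H^0(\om_{\tilde C})^+\cong H^0(\om_C)$ and $H^0(\om_{\tilde C})^-\cong H^0(C,\om_C\otimes\eta)$; writing $L:=\om_C\otimes\eta$ one has $L^{\otimes2}\cong\om_C^{\otimes2}(B)$ and $H^0(L)\cong H^{1,0}(P)$, so that $\dim H^0(L)=g-1+\tfrac r2$ equals the dimension of $P$.

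For claim (i) I would compute the codifferential of $\mathcal{P}_{g,r}$. The cotangent space to $\mathcal{A}^\delta_{g-1+r/2}$ at $P$ is $S^2H^0(\om_{\tilde C})^-\cong S^2H^0(L)$, while the cotangent space to $\mathcal{R}_{g,r}$ at $(C,\eta,B)$ is $H^0(\om_{\tilde C}^{\otimes2})^+\cong H^0(\om_C^{\otimes2}(B))\cong H^0(L^{\otimes2})$ (its dimension $3g-3+r$ matches $\dim\mathcal{R}_{g,r}$). Under these identifications the codifferential is precisely the multiplication map $\mu:S^2H^0(L)\ra H^0(L^{\otimes2})$, so $d\mathcal{P}_{g,r}$ is injective if and only if $\mu$ is surjective. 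Now $\deg L=2g-2+\tfrac r2$, and the inequality $\deg L\ge 2g+1$ holds exactly when $r\ge6$ (with equality at $(g,r)=(1,6)$). A line bundle of degree $\ge 2g+1$ on a smooth curve is normally generated, and in particular $S^2H^0(L)\ra H^0(L^{\otimes2})$ is surjective (Castelnuovo, Mumford, Green). Hence $\mu$ is onto and $d\mathcal{P}_{g,r}$ is injective, uniformly for all $g\ge1$. This is the routine half of the argument, and it explains why the threshold sits exactly at $r=6$.

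For claim (ii) the strategy is a Prym--Torelli reconstruction of the triple $(C,\eta,B)$ from the polarized abelian variety $(P,\Xi)$. Consider the Abel--Prym map $\psi:\tilde C\ra P$, $x\mapsto[x-\sigma x]$; the anti-invariance $\psi\comp\sigma=-\psi$ shows that the covering involution is realized on the image by the $(-1)$-involution of $P$ (and that the $r$ ramification points collapse to the origin). The heart of the matter is to characterise the Abel--Prym curve $\psi(\tilde C)$ intrinsically inside $(P,\Xi)$: the positivity available for $r\ge6$ (the same normal generation of $L$ used above) should make $\psi$ birational onto its image and force $\psi(\tilde C)$ to be uniquely determined by the geometry of the polarization $\Xi$ (quadrics in $|2\Xi|$, the Gauss map, or the singular locus of a theta divisor). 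Granting such a characterisation, one recovers $\tilde C$ as the normalisation of $\psi(\tilde C)$, then $\sigma$ as $-1$, hence $C=\tilde C/\sigma$, the branch locus $B$ as the image of the fixed points, and finally $\eta$ from the covering data; this pins down the triple and yields injectivity.

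The main obstacle is exactly this intrinsic reconstruction step. Unlike the classical principally polarized situation, the polarization $\Xi$ has type $\delta=(1,\dots,1,2,\dots,2)$, so the usual Torelli machinery (theta divisor, Gauss map, quadric generation) must be adapted to a non-principal polarization, and one must rule out any competing curve in $P$ producing the same polarized Prym. I expect this to require genuinely new input for small $g$; in particular the case $g=1$ is treated separately by Ikeda, presumably because the base elliptic curve degenerates the positivity and genericity exploited in the reconstruction carried out by Naranjo--Ortega for $g\ge2$.
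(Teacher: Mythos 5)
First, a point of order: the paper does not prove this statement at all --- it is quoted from the literature (Ikeda for $g=1$, Naranjo--Ortega for all $g$), so there is no internal proof to compare yours against; it can only be judged on its own terms. On those terms it splits into a correct half and an unproved half. The differential part is complete and is essentially the computation the paper itself records just afterwards: the codifferential at $(C,\eta,B)$ is the multiplication map $S^2H^0(C,L)\ra H^0(C,L^{\otimes 2})$ with $L=\om_C\otimes\eta$ and $L^{\otimes 2}\cong\om_C^{\otimes 2}\otimes\OO(B)$, one has $h^1(C,L)=h^0(C,\eta^{-1})=0$, and $\deg L=2g-2+\tfrac r2\ge 2g+1$ exactly when $r\ge 6$, so normal generation of $L$ (Castelnuovo--Mumford, or the Green--Lazarsfeld criterion $\deg L\ge 2g+1-2h^1(C,L)-\Cliff(C)$ invoked in the paper) gives surjectivity of the codifferential and hence injectivity of $d\mathcal P_{g,r}$ at every point. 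The dimension count $h^0(L^{\otimes 2})=3g-3+r=\dim\mathcal R_{g,r}$ is also right. Good.

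The genuine gap is claim (ii), which is the actual content of the theorem. Your argument there is a programme, not a proof: the decisive step --- an intrinsic characterisation of the Abel--Prym curve $\psi(\tilde C)$ inside the polarized abelian variety $(P,\Xi)$ of non-principal type $\delta=(1,\dots,1,2,\dots,2)$ --- is introduced with ``should'' and then used under the hypothesis ``granting such a characterisation''. That characterisation is precisely what Ikeda and Naranjo--Ortega prove, and it is the hard part: none of the classical Torelli tools (singularities of the theta divisor, Andreotti's Gauss-map argument, quadrics of rank $4$) transfer automatically to a non-principal polarization, which you yourself acknowledge. Note also that claim (i) cannot be leveraged into claim (ii): an everywhere-injective differential only makes $\mathcal P_{g,r}$ an immersion, i.e.\ locally injective, and local injectivity together with generic injectivity (already known from Marcucci--Pirola, Marcucci--Naranjo, Naranjo--Ortega) still does not exclude two distinct triples with isomorphic polarized Pryms. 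As written, your text establishes ``injective differential'' and must simply cite \cite{ikeda} and \cite{naranjo-ortega2} for ``injective''; to claim a proof of the full statement you would have to supply the reconstruction argument, not its outline.
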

Thus we will look for the positive dimensional fibres of $\mathcal P_{g, 2}$ and $\mathcal P_{g, 4}$. Since $\mathcal P_{1, 2}$ and $\mathcal P_{1, 4}$ have positive dimensional generic fibre, we will assume $g\geq 2$. 

The codifferential of $\mathcal P_{g,r}$ at a point  $[(C, \eta, B)]\in \mathcal R_{g,r}$  is given by the multiplication map (\cite{mp})
\begin{equation}\label{codifferenziale}
	d\mathcal P_{g,r}^* (C, \eta, B): S^2 H^0(C, \omega_C \otimes \eta) \ra H^0(C, \omega_C^2 \otimes \mathcal O(B)).
\end{equation}

We have the following:
\begin{prop}[\cite{naranjo-ortega2}]
	Let $L:=\omega_C\otimes\eta$. If $
	d\mathcal P_{g,r}$ is not injective at $[(C, \eta, B)]$ then one of the following holds:
	\begin{enumerate}
		\item  $L$ is not very ample or
		\item $L$ very ample and 
		\begin{enumerate}
			\item $r=2$ and $\Cliff(C)\leq 1$ or 
			\item $r=4$ and $\Cliff(C)=0$.
		\end{enumerate}
	\end{enumerate}
	\begin{proof}
		The proof is a straightforward application of Green-Lazarsfeld criterion for the surjectivity of a multiplication map (see \cite[Theorem 1]{greenlaz}): the map $d\mathcal P_{g,r}^*: S^2 H^0(C,L)\ra H^0(C, L^2)$ is surjective if $L$ is very ample and $\deg L \geq 2g+1-2h^1(C,L)-\Cliff(C)$. Since \[\deg L=2g-2+\frac{r}{2}\e h^1(C,L)=h^0(C, \omega_C\otimes L\meno)=0 \]
		we conclude.
	\end{proof}
\end{prop}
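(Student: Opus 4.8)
The plan is to convert the non-injectivity of the differential into the failure of surjectivity of its transpose, and then to feed the resulting multiplication map into a standard surjectivity criterion. Because the codifferential $d\mathcal P^*_{g,r}$ of \eqref{codifferenziale} is the transpose of $d\mathcal P_{g,r}$ between finite-dimensional (co)tangent spaces, the map $d\mathcal P_{g,r}$ is injective at $[(C,\eta,B)]$ exactly when $d\mathcal P^*_{g,r}$ is surjective. The first thing I would do is rewrite the target of \eqref{codifferenziale} in terms of $L=\om_C\otimes\eta$: since $B\in|\eta^{\otimes2}|$ we have $\OO(B)\cong\eta^{\otimes2}$, hence $\om_C^2\otimes\OO(B)\cong(\om_C\otimes\eta)^{\otimes2}=L^{\otimes2}$, and the codifferential is simply the multiplication map $S^2H^0(C,L)\ra H^0(C,L^{\otimes2})$.

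With this identification, I would invoke the Green--Lazarsfeld criterion (\cite[Theorem 1]{greenlaz}), which ensures that $S^2H^0(C,L)\ra H^0(C,L^{\otimes2})$ is surjective whenever $L$ is very ample and $\deg L\geq 2g+1-2h^1(C,L)-\Cliff(C)$. Passing to the contrapositive, failure of surjectivity forces either $L$ to fail to be very ample, which is alternative (1), or $L$ to be very ample while the degree inequality is violated. In the latter case the claim reduces entirely to evaluating the two numerical invariants of $L$ and reading off what the violated inequality says.

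This last step is routine. One has $\deg L=\deg\om_C+\deg\eta=2g-2+\tfrac r2$, while Serre duality gives $h^1(C,L)=h^0(C,\om_C\otimes L\meno)=h^0(C,\eta\meno)=0$, the vanishing holding because $\deg\eta\meno=-\tfrac r2<0$ for $r>0$. Substituting, surjectivity is guaranteed as soon as $2g-2+\tfrac r2\geq 2g+1-\Cliff(C)$, that is $\Cliff(C)\geq 3-\tfrac r2$; for $r=2$ this is $\Cliff(C)\geq2$ and for $r=4$ it is $\Cliff(C)\geq1$, whose negations are, since $\Cliff(C)$ is a non-negative integer, exactly the conditions $\Cliff(C)\leq1$ and $\Cliff(C)=0$ in (a) and (b). The only point I would treat with care — and the place where all the content sits — is the precise form of the Green--Lazarsfeld hypothesis, since the whole $r=2$ versus $r=4$ dichotomy is produced by this single threshold once $h^1(C,L)$ is shown to vanish; everything surrounding it is bookkeeping that follows automatically from rewriting the target as $L^{\otimes2}$.
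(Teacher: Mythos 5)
Your proposal is correct and follows exactly the paper's argument: identify the codifferential with the multiplication map $S^2H^0(C,L)\to H^0(C,L^{\otimes 2})$, apply the Green--Lazarsfeld surjectivity criterion, and compute $\deg L=2g-2+\tfrac r2$ and $h^1(C,L)=0$ to read off the Clifford index thresholds. The extra details you supply (the identification $\OO(B)\cong\eta^{\otimes 2}$ and the explicit arithmetic giving $\Cliff(C)\geq 3-\tfrac r2$) are exactly the steps the paper leaves implicit.
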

As already observed in \cite[Remark 2.2]{naranjo-ortega2}, the above Proposition can be rephrased as follows:
\begin{prop}\label{elenco 2}
	If the differential $d\mathcal P_{g,r}$ is not injective at $[(C, \eta, B)]$ then:
	\begin{enumerate}
		\item  $r=2$ and $\eta=\OO(x+y-z)$ for $x,y,z\in C$ or $r=4$ and $h^0(C,\eta)>0$. Otherwise 
		\item $r=2$ and $C$ is hyperelliptic, trigonal or a quintic plane curve or $r=4$ and $C$ is hyperelliptic.
	\end{enumerate}
	\begin{proof}
		(1) is borrowed from \cite[Lemma 2.1]{lange-ortega Cyclic cov} while (2) follows from the definition of the Clifford Index.
	\end{proof}
\end{prop}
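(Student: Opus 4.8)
The plan is to read off both assertions directly from the dichotomy of the preceding proposition, translating its two alternatives into the geometric conditions on $\eta$ and on $C$.

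For assertion (1) I would analyse when $L:=\omega_C\otimes\eta$ fails to be very ample. The key simplification is that $L$ is non-special: by Serre duality $h^1(C,L)=h^0(C,\eta\meno)=0$, since $\deg\eta\meno=-\tfrac r2<0$. For an effective divisor $D=x+y$ of degree two (allowing $x=y$) the Riemann--Roch comparison then reads
\[
h^0(C,L)-h^0(C,L(-D))=2-h^0\bigl(C,\eta\meno(D)\bigr),
\]
so $L$ is very ample precisely when $\eta\meno(x+y)$ is non-effective for all $x,y\in C$. Hence $L$ fails to be very ample exactly when some $\eta\meno(x+y)$ is effective, and a degree count identifies $\eta$: for $r=2$ the bundle $\eta\meno(x+y)$ has degree one, so effectivity means $\eta\meno(x+y)\cong\OO_C(z)$, i.e.\ $\eta=\OO_C(x+y-z)$; for $r=4$ it has degree zero, so effectivity means it is trivial, i.e.\ $\eta\cong\OO_C(x+y)$ and $h^0(C,\eta)>0$. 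These are exactly the conditions in (1), and conversely each of them forces $L$ to be non--very ample, so (1) is equivalent to alternative (1) of the preceding proposition.

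For assertion (2) I would assume $L$ very ample, which is the complementary case, so that the preceding proposition leaves us with $\Cliff(C)\le 1$ when $r=2$ and $\Cliff(C)=0$ when $r=4$. At this point I would invoke the classical classification of curves of small Clifford index: $\Cliff(C)=0$ if and only if $C$ is hyperelliptic, while $\Cliff(C)=1$ if and only if $C$ is trigonal or a smooth plane quintic. Substituting these into the two numerical cases yields precisely the lists in (2).

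The only genuinely delicate point, and the one I would take care to phrase correctly, is the very-ampleness step: one must reduce ``not very ample'' to the existence of a degree--two effective divisor $D$ with $h^0(L(-D))=h^0(L)-1$ and treat the non-reduced case $x=y$ uniformly, so that the degree count pins down $\eta$ with no gaps. Everything else is essentially bookkeeping, since the clean vanishing $h^1(C,L)=0$ makes the Riemann--Roch comparison immediate and the Clifford-index step is a citation of well-known results; indeed the cleanest write-up simply borrows the characterization of non--very ampleness in (1) from \cite[Lemma 2.1]{lange-ortega Cyclic cov} rather than redoing the subscheme analysis by hand.
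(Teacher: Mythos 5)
Your proposal is correct and follows the same route as the paper: part (1) is exactly the content of the cited \cite[Lemma 2.1]{lange-ortega Cyclic cov} (you simply supply the Riemann--Roch computation, using $h^1(C,L)=h^0(C,\eta\meno)=0$, that the paper delegates to that reference), and part (2) is the same appeal to the standard classification of curves with $\Cliff(C)\le 1$. No gaps; the only difference is that you write out the details the paper cites.
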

Now we list evidence of positive dimensional fibres that we find in the literature. For a proof of these results, we refer to the cited papers.
\begin{prop}(Naranjo-Ortega, \cite[Theorem 1.2]{naranjo-ortega2})
	Let $\mathcal P_{g,r}^h$ be the restriction of $\mathcal P_{g,r}$ to the locus of coverings of hyperelliptic curves of genus $g$ ramified in $r$ points ($ r=2,4 $). Then the generic fibre of $\mathcal P_{g,2}^h$, respectively of $\mathcal P_{g,4}^h$, is birational to a projective plane, respectively to an elliptic curve.
\end{prop}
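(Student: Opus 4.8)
The plan is to realize the generic hyperelliptic covering as an intermediate quotient of a dihedral Galois cover of $\PP$ and to read the fibre off the resulting configuration of branch points. First I would show that for a general $(C,\eta,B)$ in the hyperelliptic locus $\mathcal R_{g,r}^h$ the Galois closure $\hat C\to\PP$ of the degree-$4$ composition $\tilde C\to C\to\PP$ has Galois group the dihedral group $D_4$ of order $8$: the hyperelliptic involution of $C$ and the involution of the double cover $\tilde C\to C$ generate $D_4$, the covering involution of $\tilde C\to C$ being the central element. The datum $(C,\eta,B)$ is then encoded by a configuration on $\PP$ of $2g+2$ points $w_1,\dots,w_{2g+2}$ (the branch points of $C\to\PP$) together with $r$ points $p_1,\dots,p_r$ (the images of the branch divisor $B$), taken modulo $\operatorname{PGL}_2$. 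A Riemann--Hurwitz count then confirms that $\dim$ of this configuration space, namely $2g+2+r-3=2g+r-1$, equals $\dim\mathcal R_{g,r}^h$, so the general hyperelliptic covering is indeed of this dihedral type.

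Next I would decompose the Prym variety using the intermediate Klein four-group cover $\bar C:=\hat C/Z\to\PP$, where $Z$ is the centre of $D_4$. Its three double-cover quotients are $C$ itself (branched at the $w_i$, of genus $g$), a curve $C^{(b)}$ branched only at the $p_k$, and a curve $C^{(ab)}$ branched at all $2g+2+r$ points. Since $C^{(b)}$ has genus $r/2-1$, it is rational for $r=2$ and is an elliptic curve $E$ for $r=4$: this is exactly the elliptic curve in the statement. An isotypic decomposition of $J\hat C$ under $D_4$ identifies $P(\tilde C,C)$ with the summand belonging to the $2$-dimensional irreducible representation; it has dimension $g-1+r/2$, carries an order-$4$ automorphism $\zeta$ with $\zeta^2=-\mathrm{id}$ (a $\mathbb Z[i]$-multiplication induced by the rotation of $D_4$), and its induced polarization has type $(2,\dots,2)$ for $r=2$ and $(1,2,\dots,2)$ for $r=4$.

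I would then compute the fibre, taking as infinitesimal input the codifferential \eqref{codifferenziale}: for $C$ hyperelliptic the multiplication map $S^2H^0(\omega_C\otimes\eta)\to H^0(\omega_C^2\otimes\mathcal O(B))$ is never surjective (Proposition \ref{elenco 2}), and a Clifford/Green--Lazarsfeld analysis on $C$ shows that its cokernel has constant dimension $2$ for $r=2$ and $1$ for $r=4$, which are the expected fibre dimensions. The goal is then to prove that along a fibre the points $w_i$ (equivalently the curve $C$) remain frozen while only the branch points $p_k$ vary. For $r=2$ the pair $\{p_1,p_2\}$ is completely free, so the fibre is the space of unordered pairs on the rigidified $\PP$, namely $\operatorname{Sym}^2(\PP)\cong\mathbb P^2$, the finite choices of lift of $B$ and of square root $\eta$ being absorbed birationally. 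For $r=4$ the Prym additionally constrains the cross-ratio of the $p_k$, that is the elliptic curve $C^{(b)}=E$, and the residual one-parameter family of admissible configurations is to be identified with $E$ itself.

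The main obstacle is precisely this last identification. The delicate point is to control the isomorphism class of the $\mathbb Z[i]$-type abelian variety $P$ as the configuration moves: one must show that for $r=2$ its class is independent of the two movable points $p_1,p_2$ (so that $P$ recovers $C$ and nothing more), and that for $r=4$ it depends on the $p_k$ only through the elliptic curve $C^{(b)}$ and part of the relative position of $\{p_k\}$ with respect to $\{w_i\}$. A naive count of the placements of the $p_k$ relative to the frozen $w_i$ overshoots the expected fibre dimension when $r=4$, so one must establish that $P$, although of the polarization type of a product, is a genuinely non-split object remembering this relative position, leaving exactly the one-dimensional family parametrised by $E$. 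The cleanest route is a Prym--Torelli statement for this family of CM abelian varieties, or equivalently a direct construction of isomorphisms $P(\tilde C,C)\cong P(\tilde C',C')$ as the movable branch points vary within a fibre, together with an injectivity check on the remaining discrete data.
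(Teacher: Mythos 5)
First, a caveat on the comparison itself: the paper does not prove this Proposition. It is quoted from Naranjo--Ortega \cite[Theorem 1.2]{naranjo-ortega2}, and the surrounding text explicitly defers to the cited papers for proofs, so there is no internal argument to measure yours against; the comparison below is with the argument in \cite{naranjo-ortega2}.

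Your setup --- the $D_4$ Galois closure $\hat C\to\PP$, the dimension count, and the intermediate Klein four-group cover whose third double-cover quotient $C^{(b)}$ has genus $r/2-1$ --- is exactly the right geometric frame: it is the Galois-theoretic picture underlying Donagi's bigonal construction, which is what the actual proof rests on. But two things go wrong afterwards. First, the claim that $P(\tilde C,C)$ carries an order-$4$ automorphism $\zeta$ with $\zeta^2=-\operatorname{id}$ is false in general: writing $D_4=\langle\rho,\tau\rangle$ with $\tilde C=\hat C/\langle\tau\rangle$ and $C=\hat C/\langle\tau,\rho^2\rangle$, the rotation $\rho$ does not normalize $\langle\tau\rangle$ inside its own action on $H^1(\hat C)$ in the relevant sense: the $2$-dimensional isotypic component of $J\hat C$ does carry $\rho$ with $\rho^2=-\operatorname{id}$, but $P(\tilde C,C)$ is only (isogenous to) the $\tau$-invariant half of it, which $\rho$ maps to the $\rho^2\tau$-invariant half, i.e.\ to the Prym of the \emph{other} degree-$4$ intermediate quotient. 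Concretely, for $r=2$ the Prym is twice a principally polarized hyperelliptic Jacobian, and a generic hyperelliptic Jacobian has endomorphism ring $\ZZ$, so no $\ZZ[i]$-multiplication can be present; the ``Prym--Torelli for CM abelian varieties'' route you propose for the final step is therefore not available. Second --- and this is the decisive gap, which you yourself flag as ``the main obstacle'' --- you never establish that the Prym depends only on $C$ (for $r=2$), respectively only on $C$, the elliptic curve $C^{(b)}$ and one residual parameter (for $r=4$); the corank of the codifferential only bounds the fibre dimension from below and identifies nothing. The missing key lemma is Pantazis' duality for the bigonal construction: $P(\tilde C,C)$ is dual to $P(\tilde C',C')$, where $\tilde C'\to C'\to\PP$ is the tower obtained by exchanging the two reflection classes in $D_4$, with $C'=C^{(b)}$ in your notation. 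For $r=2$ one gets $C'\cong\PP$, so $P(\tilde C',C')=J\tilde C'$ is the Jacobian of a genus-$g$ hyperelliptic curve recovered by Torelli, and the fibre is the choice of the two branch points of $C'\to\PP$, i.e.\ $\operatorname{Sym}^2\PP\cong\mathbb{P}^2$; for $r=4$ the curve $C'$ is the elliptic curve $E=C^{(b)}$, determined by $P$ via Prym--Torelli for the dual tower, and the residual datum is the degree-$2$ map $E\to\PP$, i.e.\ a point of $\Pic^2(E)\cong E$. Without this duality statement, or an equivalent explicit identification of $P$, your argument does not close.
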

\begin{remark}
	When $g=2$ the restriction $\mathcal P_{2,r}^h$ coincides with $\mathcal P_{2,r}$. Indeed the paper \cite{fns} studies $\mathcal P_{2,2}$, respectively $\mathcal P_{2,4}$, and it shows that the generic fibre is isomorphic to a plane minus 15 lines, respectively to an elliptic curve minus 15 points.
\end{remark}
\begin{prop}(Naranjo-Ortega, \cite[Proposition 2.4]{naranjo-ortega2})
	Let $\mathcal P_{5,2}^{tr}$ be the restriction of $\mathcal P_{5,2}$ to the locus of coverings of trigonal curves of genus $5$ ramified in $2$ points. Then the fibres are all positive dimensional.
\end{prop}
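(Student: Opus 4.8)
The plan is to study the codifferential of $\mathcal P_{5,2}$ recorded in \eqref{codifferenziale} along the trigonal locus and to prove that the differential of the \emph{restricted} map is non-injective at a general point; since fibre dimension is upper semicontinuous on the source, positive-dimensionality of the generic fibre will then force \emph{every} fibre of $\mathcal P^{tr}_{5,2}$ to be positive dimensional.

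\emph{Setup and reduction.} Fix $L:=\omega_C\otimes\eta$, so that $\deg L=2g-2+\tfrac r2=9$. For generic $\eta\in\Pic^1(C)$ one has $h^1(L)=h^0(\eta^{-1})=0$ and $h^1(L^2)=0$, whence $h^0(L)=5$, $h^0(L^2)=14$ and $\dim S^2H^0(L)=15$; moreover $L$ is very ample, its failure forcing $\eta\cong\OO(p+q-z)$ for some $p,q,z\in C$, a codimension-$2$ condition on $\Pic^1(C)$. By \eqref{codifferenziale} the codifferential is the multiplication map
\[
\mu\colon S^2H^0(C,L)\longrightarrow H^0(C,L^2),
\]
a map from a $15$- to a $14$-dimensional space whose kernel is the space of quadrics through the degree-$9$ linearly normal curve $C\subset\mathbb{P}^4$, so $\operatorname{corank}(\mu)=\dim\ker\mu-1$. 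Dually, $d\mathcal P_{5,2}$ is injective iff $\mu$ is surjective. Since $\mathcal R^{tr}_{5,2}\subset\mathcal R_{5,2}$ has codimension $1$ (it is cut out by the divisor $\mathcal M_5^{tr}\subset\mathcal M_5$, the covering data being unconstrained), the restricted differential $d\mathcal P^{tr}_{5,2}$ has nontrivial kernel precisely when $\operatorname{Im}\mu+\C\nu\neq H^0(L^2)$, where $\nu\in H^0(L^2)$ spans the conormal of $\mathcal R^{tr}_{5,2}$. Hence it suffices to prove that $\operatorname{corank}(\mu)\ge 2$, or that $\operatorname{corank}(\mu)\ge 1$ together with $\nu\in\operatorname{Im}\mu$.

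\emph{Exploiting the trigonal pencil.} Let $A$ be the $g^1_3$. A Riemann--Roch count gives $h^0(L-A)=2$ for generic $\eta$, so $L=A\otimes M$ with $h^0(A)=h^0(M)=2$, and the base-point-free pencil trick (using $h^0(M\otimes A^{-1})=0$) shows the four products $a_im_j$ are independent; completing them by one section $t$ to a basis of $H^0(L)$, the Segre relation $a_0m_0\cdot a_1m_1-a_0m_1\cdot a_1m_0$ provides a first quadric, and $(|A|,|M|)$ realizes $C$ as the normalization of an irreducible curve $\overline C\subset\mathbb{P}^1\times\mathbb{P}^1$ of bidegree $(3,6)$ (arithmetic genus $10$, hence $5$ nodes) lying on the quadric cone $Q_0\subset\mathbb{P}^4$. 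Writing $V_s$ for the $9$-dimensional image of $H^0(\mathbb{P}^1\times\mathbb{P}^1,\OO(2,2))$ in $H^0(L^2)$ — injective because no $(2,2)$-divisor contains $\overline C$ — and $V_t:=t\cdot H^0(L)$ (dimension $5$), one has $\operatorname{Im}\mu=V_s+V_t$ and therefore
\[
\operatorname{corank}(\mu)=\dim(V_s\cap V_t).
\]
By the sequence $0\to L\xrightarrow{\,t\,}L^2\to L^2|_{(t)_0}\to 0$, this intersection is the space of $(2,2)$-forms whose pullback vanishes on the degree-$9$ divisor $(t)_0$, equivalently $H^1(\mathbb{P}^4,\mathcal I_C(2))\cong H^1(Q_0,\mathcal I_{C/Q_0}(2))$.

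\emph{Main obstacle.} The crux is to show $V_s\cap V_t\neq 0$, i.e.\ that $C$ lies on a \emph{second} quadric, one genuinely involving the coordinate $t$: for a curve in general position a $5$-plane and a $9$-plane in $\mathbb{P}(H^0(L^2))=\mathbb{P}^{13}$ meet only at the origin, so this is a genuinely special feature of the trigonal locus. I expect the extra quadric to be forced by the $5$ nodes of the bidegree-$(3,6)$ model: they should impose exactly the conditions making a nonzero $(2,2)$-form through $(t)_0$ exist, yielding $\operatorname{corank}(\mu)\ge 1$. This is the step I anticipate being the most delicate, and where the Maroni/scroll geometry of trigonal curves must enter — either to push $\operatorname{corank}(\mu)$ up to $2$, or to verify that the conormal $\nu=\nu_0\cdot s_B$ (with $\nu_0$ the conormal of $\mathcal M_5^{tr}\subset\mathcal M_5$ and $s_B$ the section cutting out $B$) lies in $\operatorname{Im}\mu$. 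Granting this, $d\mathcal P^{tr}_{5,2}$ is non-injective at a general point, so the generic fibre of $\mathcal P^{tr}_{5,2}$ is positive dimensional, and by upper semicontinuity of fibre dimension all fibres are.
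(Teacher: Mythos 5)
The paper does not prove this statement: it is quoted verbatim from Naranjo--Ortega and the reader is referred to \cite[Proposition 2.4]{naranjo-ortega2} for the argument, so there is no internal proof to measure you against. Judged on its own terms, your proposal is a correctly set-up \emph{plan} with the decisive step missing. The reduction is sound: the identification of the codifferential with $\mu\colon S^2H^0(L)\to H^0(L^2)$, the counts $h^0(L)=5$, $h^0(L^2)=14$, $\dim S^2H^0(L)=15$, the base-point-free pencil trick giving $L=A\otimes M$ and the rank-$4$ Segre quadric, and the identity $\operatorname{corank}(\mu)=\dim(V_s\cap V_t)$ are all fine, as is the dual criterion (corank $\ge 2$, or corank $\ge 1$ together with $\nu\in\operatorname{Im}\mu$) for non-injectivity of the restricted differential. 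But the entire proof hinges on showing $V_s\cap V_t\neq 0$, and you explicitly defer this (``I expect\dots'', ``Granting this\dots''). This is not a small technical verification: $\dim V_s+\dim V_t=9+5=14=h^0(L^2)$, so the two subspaces have \emph{complementary} dimension and the expected intersection is zero. A nonzero intersection therefore requires a genuine geometric mechanism, and the heuristic you offer (the $5$ nodes of the bidegree-$(3,6)$ model forcing a $(2,2)$-form through $(t)_0$) does not supply one: for general $t$ the nine points of $(t)_0$ are distinct from the preimages of the nodes, and nine general points on $\overline C$ impose independent conditions on the $9$-dimensional $|\OO(2,2)|$; what must be exploited is precisely that these nine points form a divisor in $|L|$, and nothing in your sketch does so. Indeed, pushing $\mathcal I_C(2)$ down the scroll $\mathbb P(\rho_*L)\to\mathbb P^1$ one finds $\operatorname{corank}(\mu)=h^0(\mathbb P^1,K)$ for a rank-$3$ bundle $K$ of degree $-3$, which vanishes if $K$ is balanced --- so the whole content of the proposition is concentrated in ruling out the balanced case, and that is exactly what is not done.

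Two further gaps remain even if that step were granted. First, the dichotomy is not resolved: you would still need either $\operatorname{corank}(\mu)\ge 2$ or the membership $\nu\in\operatorname{Im}\mu$ of the conormal direction of $\mathcal R^{tr}_{5,2}$, and neither branch is argued (the identification $\nu=\nu_0\cdot s_B$ is plausible but unused). Second, the passage from ``the generic fibre is positive dimensional'' to ``all fibres are positive dimensional'' via upper semicontinuity needs the irreducibility of $\mathcal R^{tr}_{5,2}$, which should at least be recorded. For comparison, the cited source establishes the result by a direct geometric construction of curves inside the fibres rather than by this infinitesimal corank computation; if you want to complete your route, the missing input is a structural reason (coming from the scroll/Maroni geometry together with the specific shape $L=\omega_C\otimes\eta$) why the degree-$9$ embedding of a trigonal genus-$5$ curve lies on a second quadric.
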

\begin{prop}(Casalaina Martin-Zhang, \cite[Theorem 5.1]{cas zhang})
	Let $\mathcal R_{3,4}^{Eck}\subset\mathcal R_{3,4}$ be the subset of triplets $(C,\eta, B)$ such that $C$ is a smooth quartic plane curve, $\eta$ is given by the class of a bitangent and $B$ is a reduced divisor on $C$ cut by a line $l$ in the plane where $C$ is canonically embedded. Then the generic fibre of $\mathcal P_{3,4}$ restricted to $\mathcal R_{3,4}^{Eck}$ is isomorphic to the elliptic curve described as the covering of $l$ ramified on $B$. 
\end{prop}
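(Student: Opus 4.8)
The plan is to prove the statement in two stages: first an infinitesimal computation showing the fibre is smooth of dimension exactly one, and then a global construction identifying it with the elliptic curve $E_l$. Since $C$ is a smooth plane quartic, canonically embedded, $\omega_C\cong\mathcal{O}_C(1)$; if $t$ is the bitangent tangent at $p,q$ then $\eta=\mathcal{O}_C(p+q)$ satisfies $\eta^{\otimes 2}\cong\omega_C$, and since $B=C\cap l$ we also have $\mathcal{O}_C(B)\cong\omega_C$. Thus, writing $L:=\omega_C\otimes\eta$, the codifferential \eqref{codifferenziale} is the multiplication map $\mu\colon S^2H^0(C,L)\to H^0(C,L^{\otimes 2})=H^0(C,\omega_C^{\otimes3})$, and the first thing I would do is translate everything into plane geometry. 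Tensoring by $\eta$ gives $L\otimes\eta\cong\mathcal{O}_C(2)$, so $H^0(C,L)$ is identified with the $4$-dimensional space of conics through $p$ and $q$, while $H^0(C,\omega_C^{\otimes3})=H^0(C,\mathcal{O}_C(3))$ is the $10$-dimensional space of plane cubics restricted to $C$; under these identifications $\mu$ sends a product of two conics $Q_1,Q_2$ through $p,q$ to $(Q_1Q_2/t)|_C$. Its image is therefore the image, restricted to $C$, of the multiplication $S^2H^0(\mathcal{O}_{\mathbb{P}^2}(2)\otimes\mathcal{I}_{\{p,q\}})\to H^0(\mathcal{O}_{\mathbb{P}^2}(4)\otimes\mathcal{I}_{\{p,q\}}^2)$ into quartics singular at $p$ and $q$.

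\emph{Step 1.} I would show $\operatorname{coker}\mu$ is exactly one-dimensional, so that $\ker d\mathcal{P}_{3,4}$ is one-dimensional and the fibre is smooth of dimension one. The space of quartics with a double point at $p$ and at $q$ has dimension $15-6=9$; restriction to $C$ is injective on it, because the smooth equation $F$ of $C$ is not singular at $p,q$, so $\langle F\rangle$ meets this space trivially. Hence, granted that products of conics through $p,q$ span all quartics singular at $p,q$ (a surjectivity of a multiplication map that I would check directly, e.g.\ via the base-point-free pencil trick for the ideal $\mathcal{I}_{\{p,q\}}$), the image of $\mu$ is a $9$-dimensional subspace of the $10$-dimensional $H^0(\mathcal{O}_C(3))$, giving $\dim\operatorname{coker}\mu=1$. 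This reproves positive-dimensionality in the spirit of Proposition \ref{elenco 2}(1) (the case $r=4$, $h^0(C,\eta)>0$) and pins the fibre dimension to one.

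\emph{Step 2 and the main obstacle.} Notice this computation does not involve $l$, so by itself it shows only that every such fibre is a smooth curve; the isomorphism type must come from global data. I would next verify that the one-dimensional kernel direction is tangent to the locus where $C$ and $\eta$ stay fixed and only the branch line varies, so that the fibre is a curve $\Phi$ inside the dual plane $(\mathbb{P}^2)^\vee$, namely $\{l'\colon P(C,\eta,C\cap l')\cong P\}$. The heart of the proof is then to equip $\Phi$ with a degree-two map to $l$ (equivalently, to construct directly a family of covers over $E_l$ with constant Prym) and to show its branch locus is exactly $B=C\cap l$; Riemann--Hurwitz would then force $g(\Phi)=1$ and yield $\Phi\cong E_l$. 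I expect this to be the difficult step, since it requires genuinely exploiting the interplay of the bitangent $t$, the four branch points $B$, and the polarization of type $(1,2,2,2)$ on $P$, and in particular proving that the Prym is constant along the family. I would attack it either by exhibiting an elliptic curve canonically attached to $P$ (as a factor up to isogeny, or realized through the non-principal part of the polarization) and presenting $\Phi$ as a torsor under it, or by specializing the Nagaraj--Ramanan degree-$3$ description of $\mathcal{P}_{g,4}$ on tetragonal curves and showing that, in this Eckardt configuration, the three generic preimages coalesce into the continuous family $E_l$; verifying constancy of the period along the family is where the real work lies.
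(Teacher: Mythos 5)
First, a point of comparison: the paper does not prove this statement at all. It is quoted from \cite{cas zhang} (Theorem 5.1 there), and the authors explicitly refer the reader to the cited paper for the proof. So your attempt can only be measured against what the statement actually requires, not against an in-paper argument. Your Step 1 is correct and essentially complete. The identifications $L=\omega_C\otimes\eta\cong\mathcal{O}_C(2)(-p-q)$ and $\omega_C^{2}\otimes\mathcal{O}(B)\cong\mathcal{O}_C(3)$ are right, and the spanning claim you defer is a one-line coordinate check: with $p=[1:0:0]$, $q=[0:1:0]$ the conics through $p,q$ are spanned by $xy,xz,yz,z^2$, and their pairwise products hit all nine monomials of quartics singular at both points. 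Hence $\operatorname{coker}\mu$ is one-dimensional, the kernel of $d\mathcal{P}_{3,4}$ at such a point is exactly one-dimensional, and the fibre has local dimension at most one. This sharpens Proposition \ref{elenco 2}(1) for $r=4$, $h^0(C,\eta)>0$, as you say.

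The genuine gap is everything you placed in Step 2, and it is not a technicality: it is the entire content of the cited theorem. Concretely, you have not (i) exhibited any positive-dimensional family through $(C,\eta,B)$ with constant Prym variety --- non-injectivity of $d\mathcal{P}_{3,4}$ at a point does not by itself produce a positive-dimensional fibre, so even positive-dimensionality is not yet established; (ii) shown that the kernel direction is tangent to the locus where $C$, $\eta$ and the bitangent are fixed and only $l$ moves, which is needed for the fibre of the \emph{restricted} map to coincide with the curve $\Phi\subset(\mathbb{P}^2)^\vee$ you describe; nor (iii) constructed the degree-two map $\Phi\to l$ branched exactly on $B$, which is the only thing that forces the answer to be the specific elliptic curve $E_l$ rather than some other smooth curve. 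The two strategies you propose (a torsor structure under an elliptic constituent of the $(1,2,2,2)$-polarized $P$, or degenerating the Nagaraj--Ramanan description on tetragonal curves) are plausible entry points, but neither is carried out, and the constancy of the Prym along the family --- which you correctly identify as the crux --- is precisely what is missing. In \cite{cas zhang} this global identification is obtained from the geometry of cubic surface pairs and intermediate Jacobians of Eckardt cubic threefolds, i.e.\ from input well beyond the infinitesimal computation of your Step 1.
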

\begin{remark}
	Notice that here the positive dimensional fibres are realized under the assumption $\eta$ effective. 
\end{remark}

\section{The Procedure}\label{sez 2}
In this section, we describe our strategy to investigate infinitely many positive dimensional fibres of the Prym maps. 
In particular, we study families of towers of Galois covers $\tilde C\xrightarrow{2:1}C\to\PP$. 
Our procedure doesn't bound the genus of the curves occurring in such towers. For this reason, we are able to describe infinitely many examples.

In order to do this, we introduce the Prym datum as given in \cite{cfgp,fredi gross}.
We recall the definition.
\begin{defin}
	A  Prym datum of type $(s,r)$ is a triple  $\Xi=(\tilde G,\tilde \theta,\sigma)$, 
	where $\tilde G$ is a finite group, $\tilde \theta:\Gamma_s\to\tilde G$ is an  epimorphism,  $\sigma\in\tilde G$ is a central involution and 
	$$r=\sum_{i:\sigma\in\sx{\tilde \theta(\gamma_i)}\xs}\frac{|\tilde G|}{\operatorname{ord}(\tilde \theta(\gamma_i))}.$$
\end{defin}
This datum corresponds to a family of Galois coverings and the generic point of the family fits in the following diagram
\begin{equation}
\label{tc-c}
\begin{tikzcd}[row sep=tiny]
\tilde{C} \arrow{rr}{f} \arrow{rd}{\pi} & &  \arrow{ld } C   = \tilde{C} /\sx \sigma\xs  \\
& \PP &
\end{tikzcd}
\end{equation}
where $\tilde C$ and $C$ are curves of genus $\tilde g$ and $g$ respectively. $G:=\tilde G/\sx\sigma\xs$ is the quotient group acting on $C$, the composition of $\tilde \theta$ with the projection to the quotient is an epimorphism $\theta:\Gamma_s\to G$. 
The maps $\tilde \theta$ and $\theta$ are respectively the monodromies of the two Galois covers $\tilde C\to\PP=\tilde C/\tilde G$ and $C\to\PP=C/G$. The cover $\tilde C\to\PP$ is branched on $s$ points. Moreover, in all the examples we consider, $s=4$ and $C\ra \PP$ is branched on the same points of $\tilde C\to\PP$. 
The cover $f$ is 2-sheeted and branched in $r$ points.

There is a natural identification between the tangent space to the family at the generic point and the space of the infinitesimal deformations of $\tilde C$ that preserve the action of $\tilde G$. The latter is isomorphic to $H^1(\tilde C, T_{\tilde C})^{\tilde G}$ $((\cong H^0(\tilde C,\omega_{\tilde C}^2)^{\tilde G})^*)$.
Thus $\dim H^0(\tilde C,\omega_{\tilde C}^2)^{\tilde G}=s-3 $ equals the dimension of the family.

Recall that $\sigma$ gives a decomposition of $V:=H^0(\tilde C,\omega_{\tilde C})$ in $\pm 1$-eigenspaces, resp. $V_+$ and $V_-$, where $V_+\cong H^0(C,\omega_C)$ and $V_-\cong H^0(C,\omega_C\otimes\eta)$.
Similarly we can define $W:=H^0(\tilde C,\omega_{\tilde C}^2)$ and get a decomposition $W=W_+\oplus W_-$ with $W_+\cong H^0(C,\omega_C^2\otimes\eta^2)=H^0(C,\omega_C^2\otimes\OO(B))$.
Let us denote 
\begin{equation}\label{NtildeN}
\tilde N:=\dim(S^2H^0(\tilde C,\omega_{\tilde C}))^{\tilde G}\qquad\text{and}\qquad N:=\dim(S^2H^0(C,\omega_C))^{G}.
\end{equation}
Immediately, we have $\tilde N -N=\dim(S^2V_-)^{\tilde G}$.

We are interested in families that lie in positive dimensional fibres of the Prym map, so we look for a condition that makes the codifferential of the Prym map not surjective. 
We have the following
\begin{prop}\label{Ntilde-N}
	%
	If $\tilde N - N<s-3$ the differential of the Prym map $d\mathcal{P}_{g,r}$ along the family is not injective, hence the Prym map has positive dimensional fibres along the family. 
\end{prop}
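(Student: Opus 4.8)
The plan is to realise the restriction of the codifferential of $\mathcal{P}_{g,r}$ to the family as the $\tilde G$-invariant part of the multiplication map \eqref{codifferenziale}, and then to conclude by a bare dimension count. By the general duality principle that the differential is injective exactly when the codifferential is surjective (the very principle used in the Proposition preceding this statement), everything reduces to controlling the image of the relevant multiplication map along the directions tangent to the family. Recall from \eqref{codifferenziale} that at $[(C,\eta,B)]$ the codifferential of the Prym map is the multiplication map
\[
\mu\colon S^2 V_- = S^2 H^0(C,\om_C\otimes\eta)\lra W_+ = H^0(C,\om_C^2\otimes\OO(B)).
\]

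First I would record the equivariance that governs the restriction to the family. Since $\sigma$ is a central involution, $\tilde G$ preserves the eigenspace decompositions $V=V_+\oplus V_-$ and $W=W_+\oplus W_-$, and $\mu$ is $\tilde G$-equivariant. On $S^2V_-$ the element $\sigma$ acts trivially (it acts by $-1$ on $V_-$), so $\tilde G$ acts through $G$ and $(S^2V_-)^{\tilde G}=(S^2V_-)^{G}$, a space of dimension $\tilde N - N$ by the identity recorded just before the statement. Dually, the tangent space to the family at the generic point is $H^1(\tilde C,T_{\tilde C})^{\tilde G}$, and by $\tilde G$-equivariant Serre duality its cotangent space is $H^0(\tilde C,\om_{\tilde C}^2)^{\tilde G}=W^{\tilde G}$; since $\sigma$ acts by $-1$ on $W_-$ one has $W_-^{\tilde G}=0$, so $W^{\tilde G}=W_+^{\tilde G}$, which has dimension $s-3$.

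The core step is then to check that the codifferential of the composite $(\text{family})\hookrightarrow\mathcal{R}_{g,r}\xrightarrow{\mathcal{P}_{g,r}}\mathcal{A}$ is exactly $\mu$ followed by the Reynolds projection $W_+\twoheadrightarrow W_+^{\tilde G}$ onto the invariants. By equivariance this projection intertwines $\mu$ with its restriction to invariants, so the image of the composite codifferential coincides with the image of
\[
\mu^{\tilde G}\colon (S^2V_-)^{\tilde G}\lra W_+^{\tilde G}.
\]
The hypothesis $\tilde N - N < s-3$ says precisely that the source of $\mu^{\tilde G}$ has strictly smaller dimension than its target, so $\mu^{\tilde G}$ cannot be surjective. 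Hence the codifferential along the family is not surjective, and the differential of $\mathcal{P}_{g,r}$ restricted to the family fails to be injective at the generic point. Finally, writing $\Phi\colon(\text{family})\to\mathcal{A}$ for the composite, non-injectivity of $d\Phi$ at the generic point forces the generic rank of $d\Phi$ to be strictly below $\dim(\text{family})=s-3$; therefore $\dim\Phi(\text{family})<s-3$ and the generic fibres of $\Phi$ are positive dimensional. As the family embeds generically in $\mathcal{R}_{g,r}$, these fibres are carried into the fibres of $\mathcal{P}_{g,r}$, which are thus positive dimensional.

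I expect the only genuine subtlety to be the core step, namely the assertion that passing from $\mathcal{R}_{g,r}$ to the family corresponds, on codifferentials, to taking $\tilde G$-invariants. This rests on the identification of the tangent space of the family with $H^1(\tilde C,T_{\tilde C})^{\tilde G}$ (already recalled before the statement), on the $\tilde G$-equivariance of Serre duality for the finite group $\tilde G$, and on the vanishing $W_-^{\tilde G}=0$. Once these are in place, the remaining argument is the elementary dimension count above, and the passage from a non-injective differential to positive-dimensional fibres is the standard upper-semicontinuity-of-rank argument.
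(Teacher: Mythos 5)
Your proposal is correct and follows essentially the same route as the paper: identify the codifferential along the family with the $\tilde G$-invariant part $(S^2V_-)^{\tilde G}\to W_+^{\tilde G}$ of the multiplication map, note that the source has dimension $\tilde N-N$ and the target dimension $s-3$, and conclude non-surjectivity (hence a non-injective differential and positive dimensional fibres) by the dimension count. The extra details you supply --- the equivariance of $\mu$, the vanishing $W_-^{\tilde G}=0$, and the semicontinuity-of-rank argument at the end --- are all consistent with, and merely flesh out, the paper's terser proof.
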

\begin{proof}
	As in \eqref{codifferenziale}, the codifferential of the Prym map at the generic element of the family is the multiplication map 
	\[
	m = (d\mathcal{P}_{g,r})^* : (S^2 V_-)^{\tilde G} \to W_+^{\tilde G}.
	\]
	Since by assumption  $\dim (S^2 V_-)^{\tilde G}=\tilde N-N<s-3=\dim W_+^{\tilde G}$ the codifferential cannot be surjective hence its dual has a non-trivial kernel.
\end{proof}
\begin{cor}
	If $\tilde N=N$ the family is contained in a fibre of the Prym map.
\end{cor}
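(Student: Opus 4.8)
The plan is to derive the corollary directly from Proposition \ref{Ntilde-N} by observing that the hypothesis $\tilde N = N$ is a limiting case of the strict inequality $\tilde N - N < s-3$. First I would recall from the discussion preceding \eqref{NtildeN} that the dimension of the family equals $\dim H^0(\tilde C, \omega_{\tilde C}^2)^{\tilde G} = s-3$, and that by definition of a nontrivial positive-dimensional family we have $s \geq 4$, so that $s - 3 \geq 1$. Under the assumption $\tilde N = N$ we get $\tilde N - N = 0 < 1 \leq s-3$, so the hypothesis of Proposition \ref{Ntilde-N} is satisfied a fortiori. Applying that proposition immediately yields that $d\mathcal{P}_{g,r}$ is non-injective along the family and hence the Prym map has positive-dimensional fibres there.

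To obtain the sharper conclusion that the \emph{entire} family lies in a single fibre — rather than merely that fibres are positive dimensional — I would argue at the level of the codifferential itself. Recall that $\tilde N - N = \dim(S^2 V_-)^{\tilde G}$, so the hypothesis $\tilde N = N$ says precisely that $(S^2 V_-)^{\tilde G} = 0$, i.e.\ the source of the multiplication map
\[
m = (d\mathcal{P}_{g,r})^* : (S^2 V_-)^{\tilde G} \to W_+^{\tilde G}
\]
is the zero space. Consequently $m$ is the zero map, and dually the differential $d\mathcal{P}_{g,r}$ restricted to the tangent directions along the family vanishes identically. Since this holds at the generic point of the family, the restriction of the Prym map to the family has everywhere-vanishing differential along the family.

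The step requiring the most care is passing from the infinitesimal statement (vanishing differential along the family) to the global statement that the family is contained in one fibre. The tangent space to the family at a generic point is identified with $H^1(\tilde C, T_{\tilde C})^{\tilde G} \cong (W^{\tilde G})^*$, and the computation above shows that every such tangent vector is killed by $d\mathcal{P}_{g,r}$. Because the family is irreducible (it is cut out by fixing the Prym datum $\Xi$, the genus $\tilde g$, and the monodromy), and the Prym map is a morphism of algebraic varieties, a morphism whose differential vanishes identically along a connected subvariety is constant on that subvariety; hence $\mathcal{P}_{g,r}$ takes a single value on the family, which is exactly the assertion that the family sits inside one fibre. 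I would phrase this last implication carefully, since it is where the reduction from ``non-injective differential'' to ``constant on the family'' genuinely uses that $m \equiv 0$ at \emph{every} point of the family and not just generic non-surjectivity.
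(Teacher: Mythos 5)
Your argument is correct and coincides with the one the paper leaves implicit: $\tilde N=N$ forces $(S^2V_-)^{\tilde G}=0$, so the codifferential $m$ of Proposition~\ref{Ntilde-N} is identically zero along the family, hence the differential vanishes and the Prym map is constant on the (irreducible) family --- exactly the reasoning the paper invokes later when it says that the vanishing of $(S^2H^0(\tilde C,\omega_{\tilde C})_-)^{\tilde G}$ makes the codifferential trivial and the Prym map constant. Your opening paragraph deducing only non-injectivity of $d\mathcal{P}_{g,r}$ from Proposition~\ref{Ntilde-N} is superfluous, since it yields a strictly weaker conclusion than the one you then prove directly.
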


Now we present our construction. It produces infinitely many Prym data. They yield 1-dimensional families of Galois towers of type \eqref{tc-c} that carry constant Prym variety. Thus they lie in positive dimensional fibres of the Prym map.
We organize these data into 5 classes as follows.
\begin{teo}\label{teoclassi}
	For any $N\in\N$ there are 1-dimensional families of Galois covers $\tilde C\to C\to\PP$ in the fibres of the Prym maps $\mathcal{P}_{2N,2}$, $\mathcal{P}_{2N-1,0}$ (2 families), $\mathcal{P}_{2N,0}$ and $\mathcal{P}_{2N-1,4}$.
	All the families lie in their respective hyperelliptic locus.
\end{teo}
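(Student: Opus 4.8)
The plan is to construct, for each $N\in\N$ and each of the five prescribed target Prym maps, an explicit Prym datum $\Xi=(\tilde G,\tilde\theta,\sigma)$ of type $(4,r)$ whose associated $1$-dimensional family of Galois towers $\tilde C\to C\to\PP$ satisfies the hypothesis $\tilde N-N<s-3=1$ of Proposition \ref{Ntilde-N}, in fact $\tilde N=N$, so that by the Corollary the family lies in a single fibre. First I would fix the structural skeleton common to all five classes: take $s=4$ branch points on $\PP$, so the family is $1$-dimensional (since $\dim H^0(\tilde C,\omega_{\tilde C}^2)^{\tilde G}=s-3=1$), and choose $\tilde G$ to be a group containing a central involution $\sigma$ with $C=\tilde C/\sx\sigma\xs$ mapping $2:1$. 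The degree of ramification $r$ of $f:\tilde C\to C$ is then read off from the formula $r=\sum_{i:\sigma\in\sx\tilde\theta(\gamma_i)\xs}|\tilde G|/\operatorname{ord}(\tilde\theta(\gamma_i))$ in the definition of the Prym datum, and by varying the monodromy data $\tilde\theta$ over the four branch points I would arrange $r\in\{0,2,4\}$ and the parity of the genus as demanded by the five cases.

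Next I would compute the two genera and the invariants $\tilde N$ and $N$. The genus $\tilde g$ of $\tilde C$ and the genus $g$ of $C$ follow from Riemann--Hurwitz applied to $\tilde C\to\PP$ and $C\to\PP$, using the orders of the monodromy generators $\tilde\theta(\gamma_i)$; this is where I force $\tilde g$ (equivalently $g$) to be even or odd as the statement requires. The dimensions $\tilde N=\dim(S^2V)^{\tilde G}$ and $N=\dim(S^2V_+)^{G}$ are invariant-theoretic: using the Chevalley--Weil formula one writes $V=H^0(\tilde C,\omega_{\tilde C})$ as a $\tilde G$-representation in terms of the local monodromy, decomposes $S^2V$ into irreducibles, and extracts the multiplicity of the trivial representation, doing the same for $V_+\cong H^0(C,\omega_C)$ under $G=\tilde G/\sx\sigma\xs$. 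The crux is to verify $\tilde N-N=\dim(S^2V_-)^{\tilde G}=0$, i.e.\ that the $\tilde G$-invariant quadrics vanish on the $\sigma$-anti-invariant part $V_-\cong H^0(C,\omega_C\otimes\eta)$; once this holds, Proposition \ref{Ntilde-N} immediately gives non-injectivity of the differential and the Corollary places the whole $1$-dimensional family in one fibre.

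Finally I would prove the last assertion, that every such family lies in the hyperelliptic locus. For this I would identify inside $\tilde G$ (or its action on $C$) an involution $\tau$ whose quotient $C/\sx\tau\xs$ has genus $0$: the tower structure $\tilde C\to C\to\PP$ together with the chosen monodromy will exhibit $C$ as a double cover of $\PP$, forcing $C$ to be hyperelliptic for all $C$ in the family. Concretely, one checks that the composite $C\to\PP$ of degree $|G|$ factors through a genus-$0$ intermediate quotient realizing the hyperelliptic involution, which is consistent with Proposition \ref{elenco 2}(2).

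\medskip

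The main obstacle I anticipate is the uniformity in $N$: rather than handling one group, I must produce an \emph{infinite} sequence of Prym data whose genus grows linearly in $N$ while the invariant $\tilde N-N$ stays identically zero. This requires choosing a family of groups $\tilde G=\tilde G(N)$ (for instance dihedral-type or metacyclic groups with an order-parameter scaling with $N$) for which the Chevalley--Weil computation of $(S^2V_-)^{\tilde G}$ can be carried out once and for all as a function of $N$, and shown to vanish. Organizing the five cases so that a single genus computation and a single representation-theoretic vanishing argument cover all of them—differing only in the choice of monodromy orders at the four branch points—is the delicate bookkeeping I expect to dominate the proof.
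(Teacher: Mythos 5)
Your outline reproduces the paper's strategy faithfully: fix $s=4$ so the family is $1$-dimensional, choose a Prym datum with central involution $\sigma$, compute $r$ from the definition, compute $\tilde g$, $g$ by Riemann--Hurwitz, show $\tilde N=N$ so that the Corollary to Proposition \ref{Ntilde-N} places the family in a single fibre, and finally exhibit a hyperelliptic involution on $C$. This is exactly the architecture of the paper's argument. However, as written the proposal has a genuine gap: the theorem is an existence statement, and you never exhibit the objects whose existence it asserts. You defer to ``the delicate bookkeeping'' the choice of the groups $\tilde G(N)$, the monodromy vectors $\tilde\theta(\gamma_i)$, and the verification that $\dim(S^2V_-)^{\tilde G}=0$ uniformly in $N$ --- but that bookkeeping \emph{is} the proof. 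Nothing in your text guarantees that data with the five prescribed combinations of parity of $g$ and ramification $r\in\{0,2,4\}$ actually exist for every $N$, nor that the invariant-theoretic vanishing can be arranged simultaneously with those constraints.

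For comparison, the paper resolves this by taking \emph{abelian} groups ($\tilde G=C_2\times C_{2k}$ with $k$ an explicit function of $N$, and $C_{2k}$ in one case), not the dihedral or metacyclic families you suggest, together with explicit $2\times 4$ monodromy matrices $A$. Abelianness is what makes the uniform-in-$N$ computation tractable: the character spaces $H^0(\tilde C,\omega_{\tilde C})_n$ have dimensions $d_n=-1+\sum_i\fracpart{-\alpha_i/2k}$ with $\alpha=n\cdot A$, and $\dim(S^2V)^{\tilde G}=\frac12\sum_{2n\ne0}d_nd_{-n}$ is evaluated by a case analysis on the parities of $n_1,n_2$, using $\fracpart{x}+\fracpart{-x}=1$ to show $d_nd_{-n}=0$ except on the $\sigma$-invariant characters; this yields $\tilde N=N=\frac{k-1}{2}$ and hence $(S^2V_-)^{\tilde G}=0$. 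The hyperelliptic claim then follows because $G\cong C_{2k}$ has a unique involution, which must be the hyperelliptic one on $C$. Your plan would become a proof only once you supply such explicit data for each of the five classes and carry the multiplicity computation through for general $N$.
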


All the families carry an abelian Galois group, so we can refer to \cite[Section 4]{fredi gross mohj}.
We describe in detail the first case.

Fix a positive integer $N$, set the odd number $k=2N+1$ and denote $C_n\cong \Zeta/n\Zeta$.
The Prym datum is defined by the group $\tilde G=C_2\times C_{2k}\subset C_{2k}^2$ under the inclusion $\begin{pmatrix}
1\\0
\end{pmatrix}\mapsto \begin{pmatrix}
k\\0
\end{pmatrix}$, $\begin{pmatrix}
0\\1
\end{pmatrix}\mapsto \begin{pmatrix}
0\\1
\end{pmatrix}$,  the monodromy $\tilde\theta:\Gamma_4\to\tilde G$ is represented by the matrix
\[A=\begin{pmatrix}k&0&0&k\\0&k&2&k-2\end{pmatrix}\]
where the $i$-th column is $\tilde\theta(\gamma_i)$ under the inclusion in $C_{2k}^2$ 
and the involution $\sigma=(k,k)^t$.

In case of abelian groups, the character group $\tilde G^*=\Hom(\tilde G,\C)$ is isomorphic to $\tilde G$.
In our situation, a character in $\tilde G^*$ can be identified with an element $n=(n_1,n_2)$ where $n_1\in C_2$ and $n_2\in C_{2k}$.
Set $V=H^0(\tilde C,\omega_{\tilde C})$ and let $V=V_+\oplus V_-$ be the eigenspace decomposition induced by the action of $\sigma$.
As before,  $V_+\cong H^0(C,\omega_C)$.

Our target is to compute the dimension of $(S^2H^0(\tilde C,\omega_{\tilde C})_-)^{\tilde G}$ and to show that it is zero. 
In this way, the codifferential of the Prym map would be trivial on the family and thus the Prym map would be constant.

Let $H^0(\tilde C,\omega_{\tilde C})_n$ be the subspace of $H^0(\tilde C,\omega_{\tilde C})$ where $\tilde G$ acts via the character $n$, and denote by $d_n$ its dimension.
By \cite[Prop.\ 2.8]{mohj zuo}, the dimension for a non trivial character $n$ is given by
\[d_n=-1+\sum_{i=1}^4\fracpart{-\frac{\alpha_i}{2k}}\]
where $\fracpart x$ denotes the fractional part of a real number $x$ and 
\[\alpha=(\alpha_1,\alpha_2,\alpha_3,\alpha_4):=(n_1,n_2)\cdotp A=\begin{pmatrix}kn_1,&kn_2,&2n_2,&kn_1+(k-2)n_2\end{pmatrix}.\]

We remind that if $x\in\mathbb{R}\smallsetminus\mathbb{Z}$ then $\fracpart{x}+\fracpart{-x}=1$. It is straightforward that for any $n$ such that $2n=0$ we have $d_n=0$. Indeed 
\[d_n=-1+\fracpart{\frac{-n_1}{2}}+ \fracpart{\frac{-n_2}{2}}+\fracpart{\frac{-n_2}{k}}+\fracpart{-\frac{kn_1+(k-2)n_2}{2k}},\]
therefore when $n_1=0, n_2=k$ 
\[d_n= -1+0+\fracpart{-\frac{n_2}{2}}+0+\fracpart{-\frac{n_2}{2}}=0,\]
and when $n_1=1,\;  n_2=0,k$ 
\[ d_n=-1+\frac{1}{2}+ \fracpart{\frac{-n_2}{2}}+0+\fracpart{-\frac{1+n_2}{2}}=0. \]

So now we suppose $-n\ne n$, i.e.\ $-n_2\ne n_2$, so $n_2\ne 0,k$.

\begin{itemize}
	\item If $n_1=0$ and $n_2$ is even, then $d_n=-1+\fracpart{-\frac{n_2}{k}}+\fracpart{-\frac{(k-2)n_2}{2k}}=0$.
	\item If $n_1=0$ and $n_2$ is odd, then $d_n=-1+\frac{1}{2}+\fracpart{-\frac{n_2}{k}}+\fracpart{-\frac{(k-2)n_2}{2k}}$. Thus $d_n+d_{-n}=1$, hence exactly one between $d_n$ and $d_{-n}$ is 1 and the other is 0. In both cases the product $d_{n}d_{-n}$ is zero.
	Moreover the sum of all $d_n$ of this kind is $\frac{k-1}{2}$.
	\item If $n_1=1$ and $n_2$ is even, then $d_n=-1+\frac{1}{2}+\fracpart{-\frac{n_2}{k}}+\fracpart{-\frac{k+(k-2)n_2}{2k}}$. As in the previous case $d_n+d_{-n}=1$ and $d_nd_{-n}$ is always 0. Therefore the sum of all $d_n$ of this kind equals $\frac{k-1}{2}$.  
	\item If $n_1=1$ and $n_2$ is odd, then $d_n=-1+\frac{1}{2}+\frac{1}{2}+\fracpart{-\frac{n_2}{k}}+\fracpart{-\frac{k+(k-2)n_2}{2k}}=1$. 
	So all $k-1$ terms of this form are 1, hence there are $\frac{k-1}{2}$ couples such that $d_nd_{-n}=1$.
\end{itemize}
The decomposition $H^0(\tilde C,\omega_{\tilde C})=\bigoplus_n H^0(\tilde C,\omega_{\tilde C})_n$ gives us
\[\tilde g=g(\tilde C)=\dim H^0(\tilde C,\omega_{\tilde C})=\sum_n d_n=2k-2.\]
Moreover we have
\[\dim (S^2H^0(\tilde C,\omega_{\tilde C}))^{\tilde G}=\frac{1}{2}\sum_{2n\ne 0}d_nd_{-n}=\frac{k-1}{2}=N.\]

As explained in \cite[Lemma~4.3]{fredi gross mohj} the terms of $H^0(\tilde C,\omega_{\tilde C})$ which are invariant for the action of $\sigma$ are those such that $n_1+n_2$ is even.
Hence the genus $g$ of $C$ is
\[ g=\dim H^0(\tilde C,\omega_{\tilde C})_+=\sum_{n_1+n_2\text{ even}} d_n=k-1\]
and by Riemann-Hurwitz formula we obtain that the ramification of the cover is
\[r=2\tilde g-2-2(2g-2)=2(2k-2)-4(k-1)+2=2.\]
Finally we compute the dimension of $(S^2V_+)^{\tilde G}$:
\[\dim (S^2H^0(\tilde C,\omega_{\tilde C})_+)^{\tilde G}=\frac{1}{2}\sum_{n_1+n_2\text{ even}}d_nd_{-n}=\frac{k-1}{2}=N.\]
This gives $(S^2H^0(\tilde C,\omega_{\tilde C})_-)^{\tilde G}=0$ and thus it allows to conclude.

It only remains to observe that the unique element of $G=\tilde G/\sx\sigma\xs\cong C_{2k}$ of order 2 gives the hyperelliptic involution of $C$.

In the following table, we summarize all the examples outlined in Theorem~\ref{teoclassi}.
For any integer $N$ we define $k$ and consequently, we give the data.
The first line corresponds to the example described above.
As seen $(S^2H^0(\tilde C,\omega_{\tilde C})_+)^{\tilde G}=N$. The same holds for the remaining families.
Since computations are almost identical, except case (3) which is slightly more tricky, we do not repeat them.

\begin{center}
	\begin{tabular}{cccccccc}
		\emph{n}&$k$&$\tilde g$&$g$&$r$&$\tilde G$ 
		&$A$ &$\sigma$
		\\\hline
		(1) & $2N+1$ &$2k-2$&$k-1$&$2$&$C_2\times C_{2k}$
		&$\begin{pmatrix}k&0&0&k\\0&k&2&k-2\end{pmatrix}$ & $\begin{pmatrix}k\\k\end{pmatrix}$
		\\
		(2) & $2N-1$ &$2k-1$&$k$&$0$&$C_2\times C_{2k}$
		&$\begin{pmatrix}k&0&0&k\\0&k&1&k-1\end{pmatrix}$ & $\begin{pmatrix}k\\k\end{pmatrix}$
		\\
		(3) & $N$ &$4k-3$&$2k-1$&$0$&$C_2\times C_{2k}$
		&$\begin{pmatrix}k&k&0&0\\k-1&k-1&1&1\end{pmatrix}$ & $\begin{pmatrix}k\\k\end{pmatrix}$
		\\
		(4) & $2N$ &$2k$&$k$&$0$&$C_2\times C_{2k}$
		&$\begin{pmatrix}k&0&0&k\\0&k&1&k-1\end{pmatrix}$ & $\begin{pmatrix}k\\k\end{pmatrix}$
		\\
		(5) & $2N$ &$2k-1$&$k-1$&$4$&$C_{2k}$
		&$\begin{pmatrix}1&1&k-1&k-1\end{pmatrix}$ & $\begin{pmatrix}k\end{pmatrix}$
		
	\end{tabular}
\end{center}

In order to find other (possibly higher dimensional) families contained in the fibres of $\mathcal{P}_{g,r}$ we use a \texttt{MAGMA} script similar to the one described in \cite{fredi gross} and \cite{fredi gross mohj}.
In these papers, the authors look for Shimura subvarieties generically contained in the Prym locus.
For this reason, they require the differential of the Prym map to be an isomorphism.
Here we want exactly the converse, so we look for data that satisfy Proposition~\ref{Ntilde-N}.
In this way, \texttt{MAGMA} produces various examples of families of different dimensions and genus.
While in the higher dimensional case we only find some sporadic examples, in the 1-dimensional case 
we realized that the examples behaved cyclically in the same way. This motivated our choice to organize them into five classes as in the above table.

\section{The Xiao conjecture}\label{sez 3}
In this section we look at the higher-degree analogue of the Prym maps $\mathcal{P}_{g,r}$. Indeed nothing prevents us to consider Galois covers of curves $C\in \mg$ of degree greater than 2. The theory of Prym varieties easily extends to these cases (see \cite{lange-ortega Cyclic cov} for cyclic covers): to any Galois covering $f: \tilde C\xrightarrow{d:1} C$ one can associate an abelian variety $P:=P(\tilde C,C)$ defined as the connected component to the origin of the kernel of the Norm map $\Nm: J\tilde C\ra JC$. Letting $\tilde g$, resp. $g$, be the genus of the curves $\tilde C$ and $C$, then $P$ has dimension $\tilde g-g$ and inherits a (non-principal) polarization $L$ from the theta divisor associated with $J\tilde C$. 

We denote by $\mathcal{R}(K, g, r)$ the Hurwitz scheme parametrizing coverings $f$: $K$ is the Galois group acting on $\tilde C$, $g$ is the genus of the quotient curve and $r$ the number of branch points. Thus we can define the Prym map: 
\begin{equation}\label{prym degree alto}
\begin{tikzcd}[row sep=0,column sep=small]
	\mathcal{P}_{g,r}(d)\ar[r,phantom,":"]&[-5pt] \mathcal{R}(K, g, r)\rar& \mathcal{A}_{\tilde g -g}^\delta\\
	&{[f]}\ar[r,mapsto]&{[P, L]}
\end{tikzcd}
\end{equation} Let us fix a group $\tilde G\subseteq\Aut(\tilde C)$ with $K\lhd \tilde G$ normal subgroup. For computational reasons, we assume that $\tilde C$ has Galois quotient $\tilde C/\tilde G$ isomorphic to $\PP$ and we let $s$ be the number of branch points. This means that we focus on towers $\tilde C\ra C\ra \PP$. By moving the branch points in $\PP$, we get a family of dimension $s-3$. This family naturally gives rise to a family of the same dimension contained in $\mathcal{R}(K, g, r)$. For more details, we refer the reader to Section \ref{sez 2}, where such construction is considered in case of covers $f$ of degree 2.

As already seen, the tangent space to the family at the generic point is $H^1(\tilde C, T_{\tilde C})^{\tilde G}$. For the same reason, the space of the infinitesimal deformations of $(P, L)$ that preserve the action of $\tilde G$ is $S^2H^{0,1}(P)^{\tilde G}$. If we set $V:=H^0(\tilde C, \om_{\tilde C})$ and we decompose $V=V_+\oplus V_-$, where we identify $V_+:=H^0(\tilde C, \om_{\tilde C})^K(\cong H^0(C, \om_{C}) )$ and $V_-:=H^{1,0}(P)$, we get that the differential of the Prym map $d\mathcal{P}_{g,r}(d)$ yields the map\[d\mathcal{P}_{g,r}(d): H^1(\tilde C, T_{\tilde C})^{\tilde G}\ra S^2H^{0,1}(P)^{\tilde G}. \]
Dualizing we get
\[d\mathcal{P}_{g,r}(d)^*=m\restr{S^2H^{1,0}(P)^{\tilde G}}: S^2H^{1,0}(P)^{\tilde G}\ra H^0(\tilde C, \om_{\tilde C}^{\otimes2})^{\tilde G},  \]
where, as usual, the multiplication map $m: S^2H^0(\tilde C, \om_{\tilde C}) \ra H^0(\tilde C, \om_{\tilde C}^{\otimes2})   $ is the codifferential of the Torelli map.

As in \eqref{NtildeN}, we define $\tilde N, N$. We have a higher degree analogue of Proposition \ref{Ntilde-N}: 
 \begin{prop}\label{proposizione diff non inj}
 	If $\tilde N - N<s-3$, the differential of the Prym map $d\mathcal{P}_{g,r}(d)$ along the family is not injective, hence the Prym map has positive dimensional fibres along the family.  
 	\end{prop}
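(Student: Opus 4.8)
The plan is to imitate the proof of Proposition~\ref{Ntilde-N} verbatim, since the discussion preceding the statement has already identified the relevant maps. First I would record that, by the duality set up above, the transpose of the differential is the restricted multiplication map
\[
d\mathcal{P}_{g,r}(d)^* : (S^2 V_-)^{\tilde G} \to H^0(\tilde C, \om_{\tilde C}^{\otimes 2})^{\tilde G},
\]
where $V_- = H^{1,0}(P)$, so that $S^2 H^{1,0}(P) = S^2 V_-$. The whole argument then reduces to a dimension count: compute the source dimension as $\tilde N - N$, recall the target dimension is $s-3$, and use the hypothesis $\tilde N - N < s-3$ to force non-surjectivity of the codifferential, hence non-injectivity of the differential.

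The key identity to establish is $\dim(S^2 V_-)^{\tilde G} = \tilde N - N$. I would obtain it from the $\tilde G$-equivariant decomposition
\[
S^2 V = S^2 V_+ \oplus (V_+ \otimes V_-) \oplus S^2 V_-
\]
by passing to invariants. The cross term contributes nothing: since $V_+ = H^0(\tilde C,\om_{\tilde C})^K$ carries the trivial $K$-action while $V_- = H^{1,0}(P)$ is a sum of nontrivial $K$-isotypic components, one has $(V_+\otimes V_-)^K = V_+\otimes (V_-)^K = 0$, and therefore $(V_+\otimes V_-)^{\tilde G} = 0$. Moreover $(S^2 V_+)^{\tilde G} = (S^2 V_+)^{G} = (S^2 H^0(C,\om_C))^{G} = N$, because the $\tilde G$-action on $V_+$ factors through $G = \tilde G/K$ and $V_+\cong H^0(C,\om_C)$. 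Summing the three pieces yields $\tilde N = N + \dim(S^2 V_-)^{\tilde G}$, which is the claimed equality.

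With the source of $d\mathcal{P}_{g,r}(d)^*$ of dimension $\tilde N - N$ and its target of dimension $\dim H^0(\tilde C,\om_{\tilde C}^{\otimes 2})^{\tilde G} = s-3$ (the identification from Section~\ref{sez 2}), the hypothesis $\tilde N - N < s-3$ makes the codifferential non-surjective for pure dimensional reasons. Since a linear map is surjective exactly when its transpose is injective, it follows that $d\mathcal{P}_{g,r}(d)$ has a non-trivial kernel inside the tangent space $H^1(\tilde C, T_{\tilde C})^{\tilde G}$ to the family; hence the family lies in a positive dimensional fibre of the Prym map.

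I expect the only point that is not completely automatic is the vanishing $(V_+\otimes V_-)^{\tilde G}=0$: in the degree-$2$ case of Proposition~\ref{Ntilde-N} this was immediate from the $\pm1$-eigenspace splitting of $\sigma$, whereas here it must be justified through the $K$-isotypic decomposition of $V$, using that $V_- = H^{1,0}(P)$ has no $K$-invariant vectors. Everything else is the same soft linear-algebra-and-duality argument, so no further obstacle is anticipated.
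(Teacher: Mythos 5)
Your proposal is correct and follows essentially the same route as the paper, which simply states that the proof ``works exactly like'' that of Proposition~\ref{Ntilde-N}: identify the codifferential with the restricted multiplication map $(S^2V_-)^{\tilde G}\to H^0(\tilde C,\om_{\tilde C}^{\otimes 2})^{\tilde G}$ and compare dimensions $\tilde N-N<s-3$. The one point you elaborate beyond the paper --- justifying $(V_+\otimes V_-)^{\tilde G}=0$ via the $K$-isotypic decomposition rather than the $\pm1$-eigenspaces of $\sigma$ --- is exactly the detail the paper leaves implicit, and your treatment of it is right.
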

 \begin{proof}
 	The proof works exactly like the one of Proposition \ref{Ntilde-N}. 
 \end{proof}
Starting from our families of curves, we can construct fibrations $h: S\ra B$ that carry $\tilde C$ as fibres. Indeed the closure of the image of the modular map $t\mapsto[\tilde C_t]$ gives a curve in $\mathcal{M}_{\tilde g}$. Then, up to resolving singularities and taking pull-backs, we get a fibration $h: S\ra B$, as claimed. 

The \emph{irregularity} of the surface $S$ is $q:=\dim H^1(S,\OO_S)$, and $q_h=q-g(B)$ is called \emph{relative irregularity} of the fibration.
It is quite famous that Xiao in  \cite{xiao} proved the following
\begin{teo}[Xiao]
If $h$ is not isotrivial and $B=\PP$ then
\[q\le\frac{{\tilde g}+1}2.\]
\end{teo}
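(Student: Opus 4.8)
The plan is to work entirely with the Hodge bundle $E:=f_*\om_{S/B}$ on the base $B=\PP$, a vector bundle of rank $\tilde g$, and to read off $q$ from its splitting type. First I would recall that, since $f_*\OO_S=\OO_B$ and $H^2(B,\OO_B)=0$, the Leray sequence gives $q=g(B)+h^0(B,R^1f_*\OO_S)$; as $B=\PP$ this reads $q=h^0(\PP,E^\vee)$, using relative duality $R^1f_*\OO_S\cong E^\vee$. Every bundle on $\PP$ splits, so write $E=\bigoplus_{i=1}^{\tilde g}\OO(a_i)$ with $a_1\ge\cds\ge a_{\tilde g}$; then $q=\#\{i:a_i=0\}$. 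By Fujita's semipositivity theorem one has $a_i\ge 0$, so $q$ is exactly the rank of the maximal trivial subbundle $U\subseteq E$, and the problem becomes: bound the trivial part of the semipositive bundle $E$.

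Next I would feed in non-isotriviality through the weight-one variation of Hodge structure on $R^1f_*\C$. The trivial summand $U\cong\OO^{\oplus q}$ is the flat (unitary) part of the VHS, hence it is annihilated by the Higgs field $\theta:E\to E^\vee\otimes\Omega^1_B$, which on a general fibre $F$ is fibrewise cup product with the Kodaira--Spencer class $\xi\in H^1(F,T_F)$. Non-isotriviality says $\xi\ne 0$. Restricting $U$ to $F$ produces a subspace $W:=U|_F\subseteq H^0(F,\om_F)$ with $\dim W=q$, contained in the kernel of $\theta_\xi:H^0(F,\om_F)\to H^1(F,\OO_F)$. Since $\theta_\xi$ is self-adjoint for Serre duality, this kernel is the radical of the symmetric adjoint form $b(\alpha,\beta)=\xi(\alpha\cd\beta)$ obtained by composing multiplication $S^2H^0(F,\om_F)\to H^0(F,\om_F^{\otimes 2})$ with $\xi$. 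Thus $W$ lies in the radical of a nonzero quadric, giving $q\le\tilde g-\rank(b)$.

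The crux, and the step I expect to be the main obstacle, is to convert this into the sharp inequality $q\le\frac{\tilde g+1}2$, which amounts to a lower bound $\rank(b)\ge\frac{\tilde g-1}2$ on the adjoint quadric of a nonzero Kodaira--Spencer class. The naive tools (base-point-free pencil trick, linear general position) only yield the far weaker $q\le 2\tilde g-3$, because they use a single fibre; one must instead exploit the global positivity of $E$. Here I would follow Xiao's method: take the Harder--Narasimhan filtration of $E$ over $\PP$ (equivalently, group the summands $\OO(a_i)$ by the value of $a_i$), translate it into a flag of sub-linear systems $W=W_0\subseteq W_1\subseteq\cds\subseteq H^0(F,\om_F)$ on the general fibre, and apply Clifford's theorem (controlling $\Cliff$ of each graded piece) to bound simultaneously the degrees and the dimensions of these systems. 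The trivial part $W$ then appears as a special linear series whose Clifford estimate, balanced against the slope and degree bookkeeping of the filtration, forces $2q\le\tilde g+1$. Carrying out this bookkeeping so that precisely the constant $\frac{\tilde g+1}2$ emerges, rather than a weaker multiple of $\tilde g$, is the genuinely delicate part of the argument.
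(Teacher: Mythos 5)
First, a point of order: the paper does not prove this statement. It is quoted verbatim from \cite{xiao} as background for the discussion of the conjecture and its counterexamples, so there is no ``paper's own proof'' to compare against; your proposal has to stand on its own.

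On its own terms, your setup is correct and standard: the Leray spectral sequence with $f_*\OO_S=\OO_{\PP}$ and relative duality give $q=h^0(\PP,E^\vee)$, Fujita's semipositivity and his ample-plus-flat decomposition identify $q$ with the rank of the trivial summand $U$, and $U$ is killed by the Higgs field, so $W=U|_F$ lies in the radical of the adjoint form $b(\alpha,\beta)=\xi(\alpha\cdot\beta)$ for the Kodaira--Spencer class $\xi\neq0$ of a general fibre. The gap is that everything after this point --- which is the entire content of the theorem --- is not carried out. You rightly note that a fibrewise bound $\operatorname{rank}(b)\ge\frac{\tilde g-1}{2}$ cannot be the mechanism: on a hyperelliptic fibre a nonzero $\xi$ can annihilate the whole image of $S^2H^0(\om_F)\to H^0(\om_F^{\otimes2})$, so $b=0$ and the radical is everything; this degeneration is precisely what drives the Pirola and Albano--Pirola counterexamples over positive-genus bases that the present paper is organized around. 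But your proposed replacement --- ``Harder--Narasimhan filtration of $E$ plus Clifford's theorem, with bookkeeping forcing $2q\le\tilde g+1$'' --- is a one-sentence pointer to Xiao's method, not an argument. You do not state the inequality relating the slopes and ranks of the filtration to the degrees and dimensions of the induced linear systems on $F$, you do not explain how the trivial piece $W$ and the non-isotriviality hypothesis enter that inequality, and you do not verify that the constants produce $\frac{\tilde g+1}{2}$ rather than some weaker linear bound. Since the steps you do complete yield only vacuous estimates (your own $q\le 2\tilde g-3$ is weaker than the trivial $q\le\tilde g$), the proposal in effect reduces the theorem to exactly the assertion that was to be proved and then defers it. This is a genuine gap, not a stylistic one.
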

Furthermore he conjectured that, for a base $B$ of positive genus, the relative irregularity of the fibration should satisfy
\begin{equation}\label{congettura Xiao disuguaglianza}
	q_h\le\frac{{\tilde g}+1}2.
\end{equation}
It is known that the inequality \eqref{congettura Xiao disuguaglianza} is false: Pirola in \cite{pirola}, resp.\ Albano and Pirola in \cite{albano pirola}, explicitly constructed 1 fibration, resp.\ 3 fibrations, that do not satisfy \eqref{congettura Xiao disuguaglianza}. Indeed, a modified version of the conjecture supposes $	q_h\le\lceil\frac{{\tilde g}+1}2\rceil.$

All the counterexamples are constructed considering families of covers in $\mathcal{R}(K, g, r)$. They have data:
	\begin{itemize}
	\item $ K=\ZZ/3\ZZ, g=1, r=3; $
	\item $ K=\ZZ/5\ZZ, g=2, r=0; $
	\item $ K=\ZZ/3\ZZ, g=4, r=0; $
	\item $ K=\ZZ/3\ZZ, g=3, r=0; $
	\end{itemize}
They all have constant Prym variety: indeed they have been found by considering families of coverings of hyperelliptic curves (elliptic curves in the example of Pirola \cite{pirola}) which lie in positive dimensional fibres of Prym maps. 

\begin{remark}\label{remark famiglia Pirola}
	The family studied by Pirola, i.e. the first one listed above, turns out to be interesting also from another point of view. Indeed, in \cite{fpp}, the authors show that the locus described by $J\tilde C$, for $\tilde C$ varying in the family, yields a Shimura subvariety of $\mathcal{A}_4$ generically contained in the Torelli locus. Indeed it satisfies the condition $(\ast)$ studied in the same paper which is sufficient to produce Shimura subvarieties generically contained in the Torelli locus. Moreover, in \cite{gm} and also in \cite{fgs}, it is proven that, via its Prym map, it is fibred in totally geodesic curves, countably many of which are Shimura. One of these Shimura fibres is the family (12) of \cite{fgp}. This is exactly the family we use to study the family of Pirola.  
\end{remark}

\begin{remark}
	The example with data $ K=\ZZ/5\ZZ, g=2, r=0 $ is curious in the same spirit of the previous Remark. Indeed, by \cite[Theorem 3.1]{ries}, it involves curves whose Jacobians have a non-trivial endomorphism algebra and the endomorphisms are not induced by the automorphisms of the curves. In \cite{spelta}, the second author shows that the Jacobians of such curves yield a new explicit Shimura subvariety of $\mathcal{A}_2$ generically contained in the Torelli locus. 
\end{remark}

In order to find new counterexamples to Xiao's conjecture, it seems quite natural to generalize the idea of \cite{pirola} and \cite{albano pirola} considering families of towers $\tilde C\ra C\ra \PP$ whose Prym map is constant but without requiring $C$ to be hyperelliptic and considering any Galois covering $\tilde C\ra C$ of any degree.
We have the following:
\begin{prop}\label{constroesempi}
	Up to $\tilde g=12, s=14$ (i.e. dimension 11), the only positive dimensional families of Galois towers $\tilde C\ra C\ra \PP$ that have $\tilde N - N<s-3$ and disprove \eqref{congettura Xiao disuguaglianza} are the family (12) of \cite{fgp} and the three examples of \cite{albano pirola}.
	\begin{proof} 
		Using Proposition \ref{proposizione diff non inj}, we know that when $\tilde N - N<s-3$ the differential of the Prym map associated with the family is not injective. Under this assumption, computer calculations in \verb|MAGMA| that impose $ q_h>\frac{{\tilde g}+1}2$ find only the four examples of the statement.\\
	\end{proof}
\end{prop}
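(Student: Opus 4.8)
The statement is a finite classification, so the plan is to reduce it to a provably complete \texttt{MAGMA} search over admissible Prym data and then to apply the two filters appearing in the statement: non--injectivity of the Prym differential ($\tilde N-N<s-3$) and violation of \eqref{congettura Xiao disuguaglianza} ($q_h>\frac{\tilde g+1}{2}$). The whole argument rests on the fact that, once $\tilde g\le 12$ and $4\le s\le 14$ are fixed, there are only finitely many topological types of Galois tower $\tilde C\to C\to\PP$ to inspect.

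First I would enumerate the combinatorial input. For $\tilde g\le 12$ the classical Hurwitz bound $|\tilde G|\le 84(\tilde g-1)$, together with the Riemann--Hurwitz constraint $2\tilde g-2=|\tilde G|\big(s-2-\sum_{i=1}^s \operatorname{ord}(\tilde\theta(\gamma_i))^{-1}\big)$ for each admissible branch signature, leaves only finitely many groups $\tilde G$. For each such $\tilde G$ I would list all generating vectors, i.e. surjections $\tilde\theta:\Gamma_s\to\tilde G$ with trivial product of the branch elements, taken up to $\Aut(\tilde G)$ and the braid (Hurwitz) moves on $\Gamma_s$; and for each generating vector I would run over the normal subgroups $K\lhd\tilde G$, which specify the intermediate cover $\tilde C\to C=\tilde C/K\to\PP$. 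This finite list is produced by the same machinery as in \cite{fredi gross,fredi gross mohj}.

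Next, for every datum I would compute the character of the $\tilde G$--representation on $V=H^0(\tilde C,\om_{\tilde C})$ by the Chevalley--Weil formula (which in the abelian cases reduces to the eigenvalue count via $d_n$ used in Section \ref{sez 2}). From this single character one reads off $\tilde g$, the genus $g$ of $C$ from the $K$--invariant part $V_+=V^K$, the ramification $r$, and the invariants $\tilde N=\dim(S^2V)^{\tilde G}$ and $N=\dim(S^2V_+)^{\tilde G}$ as dimensions of symmetric--square invariants. I would discard every datum with $\tilde N-N\ge s-3$: by Proposition \ref{proposizione diff non inj} the survivors are exactly the families with non--injective Prym differential, hence positive--dimensional fibres. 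For each survivor I would then compute the relative irregularity $q_h$ of the associated fibration $h:S\to B$ from the same representation--theoretic data, as the dimension of the constant (fixed) part of the Prym along the family, which for a family entirely contained in a fibre of the Prym map is the full $\dim P=\tilde g-g$; retaining only the data with $q_h>\frac{\tilde g+1}{2}$ should leave exactly four, which I would finally match by hand to the family (12) of \cite{fgp} and to the three families of \cite{albano pirola}, checking that the degenerate third Albano--Pirola example reproduces the datum found by the search.

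The difficulty is twofold. The delicate point is \emph{completeness}: one must be certain that the enumeration of generating vectors, reduced modulo $\Aut(\tilde G)$ and Hurwitz equivalence, really exhausts every topological type with $\tilde g\le 12$ and $s\le 14$ without silently dropping a class, since a single missed monodromy could conceal a fifth counterexample. The second obstacle is translating the analytic invariant $q_h$ into a purely group--theoretic quantity that \texttt{MAGMA} can evaluate; since $q_h$ measures the fixed part of the variation of Hodge structure rather than any single eigenspace dimension, the natural safeguard is to verify the formula on the four known examples, where $q_h$ is already known, before trusting the computed inequality $q_h>\frac{\tilde g+1}{2}$.
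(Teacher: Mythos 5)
Your proposal is correct and is essentially the paper's own argument: the paper likewise reduces the statement to a finite \texttt{MAGMA} enumeration of Prym data with $\tilde g\le 12$, $s\le 14$, filters by the criterion $\tilde N-N<s-3$ of Proposition \ref{proposizione diff non inj}, imposes $q_h>\frac{\tilde g+1}{2}$ (with $q_h=\tilde g-g$ coming from the constant Prym, exactly as in your last step), and matches the four survivors to \cite{fgp} and \cite{albano pirola}. The completeness and $q_h$-computation concerns you raise are legitimate but are exactly the points the paper also delegates to the published script.
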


Now we would like to explicitly describe the four examples as families of Galois towers $\tilde C\ra C\ra \PP$. 

The first example we treat is the family (12) of \cite{fgp}. Indeed the family of Pirola (\cite{pirola}) cannot be realized as a Galois cover of $\PP$. For this reason we will study the family (12) of \cite{fgp} which is contained in the family of Pirola and which describes Galois towers $\tilde C\ra C\ra \PP$, as already mentioned in Remark \ref{remark famiglia Pirola}. 
\begin{example}
$d=3$, $s=4$, ${\tilde g}=4$, $g=1$, $r=3$. \\
${\tilde G} = G(6,2) = \ZZ/6\ZZ = \sx{g:g^6=1}\xs$. \\
$(\tilde\theta(\gamma_1),\dots,\tilde\theta(\gamma_4))=(g^3 , g^5 , g^5 , g^5 ),\quad K=\sx{\sigma=g^2}\xs\cong\ZZ/3\ZZ$. The monodromy matrix is $A=(3,5,5,5)$. \\
The action of $\tilde G$ gives the decomposition $H^0(\tilde C,\om_{\tilde C})=W_3\oplus W_4\oplus W_5$, where $W_n$ is the subspace of $H^0(\tilde C,\omega_{\tilde C})$ where $\tilde G$ acts via the character $n$. We have $\dim W_3=\dim W_4=1$ and $\dim W_5=2$. 
Moreover $H^0(\tilde C,\om_{\tilde C})_+=W_3$. 
Therefore $(S^2H^0(\tilde C,\om_{\tilde C}))^{\tilde G}=(S^2H^0(\tilde C,\om_{\tilde C})_+)^{\tilde G}=S^2W_3$ and so the multiplication map $m: (S^2V_{-})^{\tilde G}\ra H^0(\om_{\tilde C}^{\otimes 2})^{\tilde G}$ is trivial. Hence the family is a curve that lies in a fibre of the Prym map $\mathcal{P}_{1,3}(3)$.
The relative irregularity of the fibred surface is $q_h={\tilde g}-g=3>\frac{5}{2}=\frac{{\tilde g}+1}{2}$, hence it violates the inequality \eqref{congettura Xiao disuguaglianza}.
\end{example}

Next example is the first one that appears in \cite[Section 4]{albano pirola}, given by an étale 5:1 cover. 
\begin{example}
$d=5$, $s=6$, ${\tilde g}=6$, $g=2$, $r=0$. \\
${\tilde G} = G(10,1) = D_5 = \sx{g_1,g_2:g_1^2=g_2^5=1,g_1g_2g_1=g_2^{-1}}\xs$. \\
$(\tilde\theta(\gamma_1),\dots,\tilde\theta(\gamma_6))=(g_1g_2^2 ,\ g_1g_2^4 ,\ g_1g_2 ,\ g_1g_2^4 ,\ g_1g_2 ,\ g_1g_2 ),\quad K=\sx{\sigma=g_2}\xs\cong\ZZ/5\ZZ$. \\
Using the notation of \texttt{MAGMA}, we get $H^0(\tilde C,\om_{\tilde C})=2V_2\oplus V_3\oplus V_4$, where $V_i$ are irreducible representations of $\tilde G$ such that $\dim V_2=1$ and $\dim V_3=\dim V_4=2$.
We have that $(S^2H^0(\tilde C,\om_{\tilde C}))^{\tilde G}=3S^2V_2\oplus(S^2V_3)^{\tilde G}\oplus(S^2V_4)^{\tilde G}$ has dimension 5 and that $H^0(\tilde C,\om_{\tilde C})_+=2V_2$. Therefore $\dim(S^2H^0(\tilde C,\om_{\tilde C})_{+})^{\tilde G}=3$.
Since $\dim(S^2H^0(\tilde C,\om_{\tilde C})_-)^{\tilde G}=2<s-3=3$, the Prym map is not injective on the family.
Again inequality \eqref{congettura Xiao disuguaglianza} does not hold: ${\tilde g}-g=4>\frac{7}{2}=\frac{{\tilde g}+1}{2}$.
\end{example}

Here we have the example of \cite[Section 5]{albano pirola}. 
\begin{example}
$d=3$, $s=10$, ${\tilde g}=10$, $g=4$, $r=0$. \\
${\tilde G} = G(6,1) = D_3 = \sx{g_1,g_2:g_1^2=g_2^3=1,g_1g_2g_1=g_2^{-1}}\xs$. \\
$(\tilde\theta(\gamma_1),\dots,\tilde\theta(\gamma_{10}))=( g_1, g_1, g_1g_2^2, g_1g_2^2, g_1g_2, g_1, g_1g_2^2, g_1g_2, g_1g_2, g_1 ),\quad K=\sx{\sigma=g_2}\xs\cong\ZZ/3\ZZ$. \\
In this case $H^0(\tilde C,\om_{\tilde C})=4V_2\oplus 3V_3$, where $\dim V_2=1$ and $\dim V_3=2$, $H^0(\tilde C,\om_{\tilde C})_+=4V_2$. Then $(S^2H^0(\tilde C,\om_{\tilde C}))^{\tilde G}=10S^2V_2\oplus 6(S^2V_3)^{\tilde G}$ has dimension 16 and $(S^2H^0(\tilde C,\om_{\tilde C})_+)^{\tilde G}=10S^2V_2$ has dimension 10. Hence $\tilde N-N=6<7$ guarantees $d\mathcal{P}_{4,0}(3)$ not injective. Moreover 
${\tilde g}-g=6>\frac{11}{2}=\frac{{\tilde g}+1}{2}$ violates \eqref{congettura Xiao disuguaglianza}.
\end{example}

Finally we have the example \cite[Section 6]{albano pirola}.  
\begin{example}
	$d=3$, $s=7$, ${\tilde g}=6$, $g=2$, $r=2$. \\
	${\tilde G} = G(6,1) = D_3 = \sx{g_1,g_2:g_1^2=g_2^3=1,g_1g_2g_1=g_2^{-1}}\xs$. \\
	$(\tilde\theta(\gamma_1),\dots,\tilde\theta(\gamma_7))=(g_1g_2 ,\ g_1g_2 ,\ g_1 ,\ g_1g_2^2 ,\ g_1 ,\ g_1 ,\ g_2 ),\quad K=\sx{\sigma=g_2}\xs\cong\ZZ/3\ZZ$. \\
	\texttt{MAGMA} gives $H^0(\tilde C,\om_{\tilde C})=2V_2\oplus 2V_3$, where $\dim V_2=1$ and $\dim V_3=2$ and $H^0(\tilde C,\om_{\tilde C})_+=2V_2$.
	We have that $(S^2H^0(\tilde C,\om_{\tilde C}))^{\tilde G}=3S^2V_2\oplus 3(S^2V_3)^{\tilde G}$ has dimension 6 and that $(S^2H^0(\tilde C,\om_{\tilde C})_+)^{\tilde G}=3S^2V_2$ has dimension 3. Once again the differential of the Prym map along the family is not injective and ${\tilde g}-g=4>\frac{7}{2}=\frac{{\tilde g}+1}{2}$ violates \eqref{congettura Xiao disuguaglianza}.
\end{example}

As already said, our data give a slightly different presentation of the last example presented by \cite{albano pirola}. We easily observe that they are the same. Indeed our family yields the following diagram:
\begin{equation}\label{diagramma famiglie}
\begin{tikzcd}
\tilde C\arrow{d}{2:1}[swap]{\pi}\arrow{r}{3:1}\arrow{dr}{\psi}&C\arrow{d}{2:1}\\
D\arrow{r}{3:1}&\PP\cong\tilde  C/{D_3},
\end{tikzcd}
\end{equation}
the curve $D$ is obtained as the quotient of $\tilde C$ by a lift of the hyperelliptic involution of $C$. We have 7 branch points $z_1,\dots,z_7$ in $\PP$ and the map $\tilde C\ra \PP$ has three ramification points of order 2 over $z_1,\dots,z_6$. Let us call them $p_{ij}, i=1,\dots,6, j=1,2,3$. Moreover $\psi$ has 2 ramification points of order three over $z_7$. Let us call them $q_1,q_2$. The 2:1 map $\tilde C\ra D$ ramifies on one among the  $p_{ij}$ for every $i$, while it is \'etale over the remaining two and it is \'etale over $q_1, q_2$. If we denote $p=\pi(q_1)=\pi(q_2)$, then we get the map $D\xrightarrow{3:1}\PP$ associated with the linear system $|3p|$. This is the starting point of the construction of Albano and Pirola. Indeed the map $\tilde C\ra C$ is \'etale over all the $p_{ij}'s$ while it is completely ramified over $q_1$ and $q_2$. When these two points are glued we get the singular curve $C_p$ of \cite{albano pirola}. The fibration of  \cite[Section 6]{albano pirola} is constructed desingularizing such curves  $C_p$, i.e.\ considering the curves $\tilde C$ provided by our example. Therefore this example is clearly another presentation of the one discussed in \cite[Section 6]{albano pirola}.

\end{document}